\newtheorem {theorem}    {Theorem}[section]
\newtheorem {problem}    {Problem}
\newtheorem {lemma}      [theorem]    {Lemma}
\newtheorem {cor}  [theorem]    {Corollary}
\newtheorem {prop}[theorem]    {Proposition}
\numberwithin{equation}{section}
\newtheorem{conj}[theorem]{Conjecture}
\theoremstyle{definition}
\newtheorem {rmk}   [theorem] {Remark}
\newtheorem {rmks}   [theorem] {Remarks}
\newtheorem {exa}   [theorem] {Example}
\newcommand{\cal}{\mathcal}
\newcommand{\scr}{\mathscr}
\newcommand{\w}{{\mathfrak{w}}}
\newcommand{\la}{\lambda}
\newcommand{\lap}[2]{\la _{#1,#2}}
\newcommand{\linch}[2]{\la({{\mathscr{C}}_{#1,#2}^+})}
\newcommand{\lambdaCplus}[2]{\la({{\mathscr{C}}_{#1,#2}^+})}
\newcommand{\lambdaCminus}[2]{\la({\mathscr{C}}^-_{#1,#2})}
\newcommand{\inch}[2]{{{\mathscr{C}}_{#1,#2}^+}}
\newcommand{\dech}[2]{{\mathscr{C}}^-_{#1,#2}}
\newcommand{\ldechr}[2]{\la({\mathscr{C}}^-_{#1,#2})^{\ast}}
\newcommand{\picsize}{2}
\newcommand{\fw}{\mathfrak{w}}
\begin{document}

\title{Whittaker functions and Demazure characters}
\date{\today}

% \author[G. Chinta]{Gautam Chinta}

\author[K.-H. Lee]{Kyu-Hwan Lee$^{\star}$}
\thanks{$^{\star}$K.-H.L. was partially supported by a grant from the Simons Foundation (\#318706).}
\address{Department of
Mathematics, University of Connecticut, Storrs, CT 06269, U.S.A.}
\email{khlee@math.uconn.edu}

\author[C. Lenart]{Cristian Lenart$^{\dagger}$}
\thanks{$^{\dagger}$ C.L. was partially supported by the NSF grant DMS--1362627.}
\address{Department of
Mathematics and Statistics, State University of New York at Albany, Albany, NY 12222, U.S.A.}
\email{clenart@albany.edu}

\author[D. Liu]{Dongwen Liu, \\ with Appendix by Dinakar Muthiah and Anna Pusk\'as}
\address{School of Mathematical Science, Zhejiang University, Hangzhou 310027, Zhejiang,
P.R. China }
\email{maliu@zju.edu.cn}

\address{Department of Mathematical and Statistical Sciences,
University of Alberta, Edmonton, AB, Canada T6G 2G1}
\email{muthiah@ualberta.ca}

\address{Department of Mathematical and Statistical Sciences,
University of Alberta, Edmonton, AB, Canada T6G 2G1}
\email{puskas@ualberta.ca}

\subjclass[2010]{Primary 11F70; Secondary 22E50, 20F55}

\begin{abstract} 
In this paper,  we consider how to express an Iwahori--Whittaker function through Demazure characters. Under some interesting combinatorial conditions, we obtain an explicit formula and thereby a generalization of the Casselman--Shalika formula. Under the same conditions, we compute the transition matrix between two natural bases for the space of Iwahori fixed vectors of an induced representation of a $p$-adic group; this generalizes a result of Bump--Nakasuji.
\end{abstract}

\maketitle

\section{Introduction}\label{sec:intro}

The Casselman--Shalika formula describes a spherical Whittaker function using the root system and the character of an irreducible representation of the dual group. The formula not only  plays a fundamental role in the theory of $p$-adic groups and automorphic forms, but also connects many different constructions in mathematics, such as Schubert varieties, crystal bases and Macdonald polynomials. For example, see \cite{BBL}.

In this paper, we study a generalization of the Casselman--Shalika formula to the case of Iwahori--Whittaker functions through Demazure characters. To be precise, let $\mathfrak g$ be a finite-dimensional simple Lie algebra over $\mathbb C$, which should be considered as the Lie algebra of the dual group. Let $P$ be the weight lattice of $\mathfrak g$, and $\mathbb C[P]$ the group algebra of $P$, with basis $e^\lambda$, $\lambda \in P$. The subset of dominant weights will be denoted by $P_+$. We also denote by $\Phi\supset\Phi^+$ the set of roots and positive roots, by $\Pi=\{a_i \}_{i \in I}$ the set of simple roots, and by $S=\{ \sigma_i \}_{i \in I}$ the set of simple reflections, which generates the Weyl group $W$. Let $v$ be an indeterminate, and set $\mathcal O_v= \mathbb C(v) \otimes \mathbb C[P]$. 

Consider the Demazure character $\partial_{w, \lambda}$ for $w\in W$ and $\lambda \in P_+$, which is the formal character of the Demazure module associated with the weight $w\lambda$. When $w =w_\circ$, the longest element, the character $\partial_{w_\circ, \lambda}$ is nothing but the character of the irreducible representation of $\mathfrak g$ with highest weight $\lambda$. Now the Casselman--Shalika formula is given by \begin{equation} \label{CS} \widetilde{W}_{w_\circ, \lambda} = \left ( \prod_{\alpha \in \Phi_+} (1-v e^{-\alpha}) \right ) \partial_{w_\circ, \lambda}, \end{equation} where $\widetilde{W}_{w_\circ, \lambda}$ is the spherical Whittaker function. 

As mentioned above, this paper is concerned with generalizing the formula \eqref{CS} to the case  involving the Iwahori--Whittaker functions ${W}_{w, \lambda}$ (to be defined in the next section) and the Demazure characters $\partial_{x, \lambda}$, for $w, x \in W$. That is to say, we would like to compute the coefficients ${C}_{w,x}\in \cal O_v$, $x\leq w$, in the expansion
\[
{W}_{w,\lambda}=\sum_{x\leq w} {C}_{w,x}\partial_{x,\lambda}.
\]

To make the problem more tractable, we consider the \textit{Demazure atoms} $D_{x, \lambda}$ (see Section~\ref{sect2}),  instead of working with the Demazure characters directly. 
We write
\[
{W}_{w,\lambda}=\sum_{x\leq w} c_{w,x} D_{x, \lambda},
\]
and study how to compute $c_{w,x}\in \mathcal O_v$, $x\leq w$. The coefficients  $C_{w,x}$ and $c_{w,x}$ are related in a simple way (Corollary \ref{simple}):
\[ c_{w,x}=\sum_{x\leq y\leq w}C_{w,y} \quad \text{ and } \quad C_{w,x}=\sum_{x\leq y\leq w}(-1)^{\ell(y)-\ell(x)}c_{w,y}.\]

Still, in general, it would be difficult to obtain a complete description of the coefficients $c_{w,x}$. However, the main result of this paper shows how to compute the coefficients $c_{w,x}$ under some interesting conditions involving {\it good words} and {\it shellability}. More precisely, under Condition (A) or (B) at the beginning of Section \ref{sec-main}, we obtain:

\begin{theorem}
Let $w=s_1 \cdots s_n$ be a reduced word with $s_i=s_{\alpha_i}$ for some $\alpha_i\in \Pi$, $i=1\ldots, n$, and 
\[
\beta_i=s_1\cdots \hat{s}_{i_1}\cdots \hat{s}_{i_2}\cdots \alpha_i,\quad i=1,\ldots, n,
\] where the indices $i_1< \dots <i_d$ between $1$ and $n$ are determined by condition {\rm (A)} or {\rm (B)}. 
 Then we have 
\[
c_{w,x}=(1-ve^{-\beta_1})\cdots {\mathcal T}_{\beta_{i_1}}\left(\cdots
{\mathcal T}_{\beta_{i_d}}\left(\cdots (1-ve^{-\beta_n})\right)\cdots\right),
\] where ${\mathcal T}_{\beta}=(1-ve^{-\beta})\partial_{\beta}-1$ and $\partial_\beta$ is the Demazure operator corresponding to the root $\beta$. 
\end{theorem}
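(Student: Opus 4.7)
The plan is to proceed by induction on $n=\ell(w)$, using a one-step extension recursion derived from the Hecke-algebra (intertwining operator) action on Iwahori--Whittaker functions. The base case $w=e$ is immediate since $c_{e,e}=1$. For the inductive step, I would fix the reduced expression $w=s_1s_2\cdots s_n$ and write $W_{w,\lambda}$ in terms of $W_{s_2\cdots s_n,\lambda}$ (or the mirror version, stripping a letter from the other end) via the standard intertwiner attached to a simple reflection. Its effect on Whittaker vectors is classically a combination of the form $(1-ve^{-\alpha})\partial_\alpha-1$, which is precisely the operator $\mathcal T_\beta$ appearing in the theorem. Thus each recursive step will produce either a scalar multiplicative factor $(1-ve^{-\beta_i})$ or a Demazure-type action $\mathcal T_{\beta_{i_k}}$ on the expression built so far.

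The next step is to transfer this recursion for $W_{w,\lambda}$ into a recursion for the coefficients $c_{w,x}$ in the Demazure-atom basis. The essential tool here is the explicit action of $\partial_\alpha$ on an atom $D_{x,\lambda}$, which is a small sum of atoms indexed by $x$ and $s_\alpha x$; together with the covering rule $D_{s_ix,\lambda}$ in terms of $D_{x,\lambda}$ when $\ell(s_ix)=\ell(x)+1$ (recorded in Section~\ref{sect2}), this lets the coefficient recursion close on itself and places the operator $\mathcal T_\beta$ on the coefficient side. At this point the combinatorial hypotheses enter: Conditions (A) or (B) and the notion of good word should be exactly what is required for the recursion to telescope within the Bruhat interval $[e,w]$, without spawning cross terms. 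The marked indices $i_1<\cdots<i_d$ would then be those positions at which the Demazure piece of the intertwiner survives; the roots $\beta_i=s_1\cdots\hat s_{i_1}\cdots\hat s_{i_d}\cdots\alpha_i$ arise naturally as the images of $\alpha_i$ under the ``surviving'' prefix after the previously marked letters have been removed. At non-marked positions, shellability should force a cancellation that kills the Demazure contribution, leaving only the scalar factor $(1-ve^{-\beta_i})$.

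The main technical obstacle I anticipate is the final bookkeeping: because $\mathcal T_\beta$ does not commute with multiplication by $(1-ve^{-\beta'})$, stitching the one-step recursions into the single nested expression in the statement requires verifying that the order of operations in the product matches the order in which letters are removed from the reduced word, and that the chosen reduced expression does not matter (given Conditions (A)/(B)). I expect this to boil down to a braid-like compatibility among the $\mathcal T_\beta$'s that is valid precisely on good words, combined with straightforward root-system manipulations to identify the $\beta_i$. Along the way, the relation between $c_{w,x}$ and $C_{w,x}$ recalled in Corollary~\ref{simple} may be useful as a sanity check, since the Casselman--Shalika formula \eqref{CS} must fall out of the product expression in the spherical limit $w=w_\circ$, providing a strong consistency test for the combinatorial setup.
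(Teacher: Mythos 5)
Your overall strategy coincides with the paper's: strip $s_1$ from the chosen reduced word, apply the one-step coefficient recursion (Proposition~\ref{ind}, which the paper derives from Lemmas~\ref{lem2} and~\ref{lem3}), and then push the leftover simple reflections across the surviving operators via $w\cdot\mathcal T_\alpha=\mathcal T_{w\alpha}\cdot w$ (equation~\eqref{cross}) to convert the simple roots $\alpha_i$ into the roots $\beta_i$. The skeleton is therefore right, and two of your anticipated obstacles are non-issues: no braid-type compatibility among the $\mathcal T_\beta$'s is needed (the nesting order is simply the order of the letters, and \eqref{cross} handles all the bookkeeping), and independence of the reduced expression is not required, since the theorem is stated for the particular word realizing Condition (A) or (B).

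The genuine gap is that the entire combinatorial content of the argument is asserted rather than established. The recursion of Proposition~\ref{ind} has three cases, and case (ii) (when $x\le w$ and $x>sx$) produces the cross term $(1-ve^{-\alpha})s(c_{w,sx}-c_{w,x})+\mathcal T_s(c_{w,x})$, which would destroy the product formula. What must actually be proved --- and what occupies Lemmas~\ref{6.1}--\ref{6.4} of the paper --- is the heredity of the hypotheses: under Condition (A) (resp.\ (B)), if $i_1>1$ then $x\not\le s_1w$, so case (iii) applies and the pair $(s_1w,s_1x)$ again satisfies (A) (resp.\ (B)) for the word $s_2\cdots s_n$; while if $i_1=1$ then $x<s_1x$ and $x\le s_1w$, so case (i) applies and $(s_1w,x)$ again satisfies the condition. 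Establishing this requires nontrivial work with the $Z(s,w_1,w_2)$ property of the Bruhat order (Lemma~\ref{lem}), Deodhar's inequality, and the lexicographic extremality of the chains $\scr{C}^{\pm}_{x,\mathfrak w}$ (for instance Lemma~\ref{6.4}, showing $S(x,sw)=S(x,w)\setminus\{-w^{-1}\alpha\}$). Your statement that shellability ``should force a cancellation that kills the Demazure contribution'' at unmarked positions also mischaracterizes the mechanism: at an unmarked first position nothing cancels; rather, $x\not\le s_1w$ forces the coefficient to be the pure multiple $(1-ve^{-\alpha_1})s_1(c_{s_1w,s_1x})$. Without these heredity lemmas the induction does not close, so as written the proposal is a correct plan whose central step is missing.
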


Conditions (A) and (B) are intriguing. In fact, based on thorough computer tests, in Section~\ref{cond-a-b} we conjecture that they are equivalent in a strong sense. Shortly after posting our paper, D. Muthiah and A. Pusk\'as proved our conjecture; their proof is included as an Appendix. As discussed in Section~\ref{goodword}, Condition (A) is closely related to smoothness of Schubert varieties in flag varieties $G/B$. We also present some statistical information regarding the frequency with which these conditions are satisfied. 

We establish an application of Conditions (A) and (B) to the problem of computing the transition matrix between two natural bases for the space of Iwahori fixed vectors of an induced representation of a $p$-adic group. The same problem was studied by Bump and Nakasuji \cite{BN}. They showed that, in the simply-laced case, when $w$ admits a good word for $x$, the entry $m(x,w)$ of the transition matrix is given by 
\begin{equation} \label{eqn-m}
m(x,w)=\prod_{\alpha\in S(x,w)}\frac{1-q^{-1}{\bf z}^\alpha}{1-{\bf z}^\alpha},
\end{equation}
where $S(x,w)$ is the set of roots determined by the good word condition. However, it seems that there is a gap in the proof of \cite{BN}, which we do not know how to fix at the present. In Section \ref{sec-Cass}, we assume Condition (B) and prove the formula \eqref{eqn-m} with $S(x,w)$ replaced by a set determined by Condition (B). The main idea of the proof is similar to that of \cite{BN}. Given the equivalence of Conditions (A) and (B), the Bump-Nakasuji result in full root system generality follows. This provides another evidence that Conditions (A) and (B) are natural ones to be considered in representation theory.   

Related to the above mentioned coefficients $m(w,x)$, it is also worth noting the recent paper of Nakasuji and Naruse \cite{NN}. By using a change of basis in the Hecke algebra, they express all of these coefficients in a completely different way compared to \eqref{eqn-m}, namely as sums over combinatorial sets. The mentioned change of basis in the Hecke algebra generalizes the theory of so-called {\it root polynomials}, which provides similar combinatorial formulas for localizations of Schubert classes in the equivariant cohomology and $K$-theory of flag varieties, see \cite{LZ} and the references therein, as well as \cite[Remark~1]{NN}. 

The fact that there are two types of formulas for the coefficients $m(w,x)$, namely the general formula in \cite{NN} and the simpler formula \eqref{eqn-m} if Conditions (A) and (B) hold, is very similar to the existence of a general summation formula for Schubert classes (via root polynomials), versus a much simpler product formula in the smooth case, see \cite[Chapter~7]{BL}. It turns out that the latter formula is hard to derive from the former, so completely separate proofs are needed. In this context, it is not surprising that Conditions (A) and (B) are related to smoothness of Schubert varieties, as noted above.

\section{Description of the  problem}\label{sect2}

In this section, we present the main question of this paper, introduced in the previous section, in more detail. We keep the notions fixed in the previous section.

Recall that the Hecke algebra $\cal H_v$ is the algebra over $\mathbb C(v)$ defined by the generators $T_i$, $i \in I$, subject to the quadratic relations \[ T_i^2=(v-1)T_i +v , \qquad i\in I ,\] and the braid relations corresponding to $W$. The algebra $\cal H_v$
acts on $\cal O_v$ by
\[
T_i \mapsto \cal T_i :=(1-v e^{-a_i})\partial_i-1,\quad i\in I,
\]
where $\partial_i$, $i\in I$, are the Demazure operators defined by
\[
\partial_i =\frac{1-e^{-a_i}\sigma_i}{1-e^{-a_i}}.
\]
In particular the operators $\cal T_i$, $i\in I$, satisfy the braid relations. Hence one may define 
\[
T_w\mapsto \cal T_w=\cal T_{i_1}\cdots  \cal T_{i_l}
\]
 for an arbitrary choice of reduced expression  $w=\sigma_{i_1}\cdots \sigma_{i_l}$. For a dominant weight $\lambda\in P_+$, define 
\[
W_{w,\lambda}=\cal T_w e^\lambda \quad \text{ and } \quad \widetilde{W}_{w,\lambda}=\sum_{x\leq w} W_{x,\lambda},
 \quad w\in W.
\]
As shown in \cite{BBL}, the expression $W_{w, \lambda}$ corresponds to the Iwahori--Whittaker function, and the sum $\widetilde{W}_{w_\circ, \lambda}$ corresponds to the spherical Whittaker function where $w_\circ \in W$ is the longest element.

It is well-known that the Demazure operators 
$\partial_i$, $i\in I$, satisfy the braid relations as well so that the operator $\partial_w$ is well-defined for $w \in W$ using any reduced expression of $w$. Then the Demazure character is given by
\[
\partial_{w,\lambda}=\partial_w e^\lambda, \quad \lambda \in P_+, 
\]
which is the formal character of the Demazure module associated with the weight $w\lambda$. Recall the Casselman--Shalika formula:
\begin{equation} \label{CS-1} \widetilde{W}_{w_\circ, \lambda} = \left ( \prod_{\alpha \in \Phi_+} (1-v e^{-\alpha}) \right ) \partial_{w_\circ, \lambda}. \end{equation}

As mentioned above, we are interested in generalizing the formula \eqref{CS-1} to the cases involving $\widetilde{W}_{w, \lambda}$ (or $W_{w, \lambda}$) and $\partial_{x, \lambda}$ for $w, x \in W$. Precisely, we would like to compute the coefficients $\widetilde{C}_{w,x}\in \cal O_v$, $x\leq w$, in the expansion
\[
\widetilde{W}_{w,\lambda}=\sum_{x\leq w} \widetilde{C}_{w,x}\partial_{x,\lambda}.
\]
Alternatively, if we write 
\[
W_{w,\lambda}=\sum_{x\leq w} C_{w, x} \partial_{x,\lambda},
\]
we have
\[
\widetilde{C}_{w,x}=\sum_{x\leq y\leq w} C_{y,x}
\]
and
\[
C_{w,x}=\sum_{x\leq y\leq w}(-1)^{\ell(w)-\ell(y)}\widetilde{C}_{y,x}.
\]
by the M\"{o}bius inversion \cite[Theorem 1.2]{D1}.
% \section{Some reduction}

However, we found it more convenient to work with {\it Demazure atoms}. We define
\[D_i=\partial_i-1=e^{-a_i}\frac{1-\sigma_i}{1-e^{-a_i}}, \quad i \in I, 
\]
which is the specialization of $\cal T_i$ at $v\to 0$. Then $D_i$, $i\in I$ satisfy the braid relations, and 
we define $D_w$, $w\in W$ in the obvious way. Now the Demazure atoms are defined to be $$D_{w,\lambda}=D_w e^\lambda \quad \text{ for } w \in W \text{ and }   \lambda\in P_+.$$ 

\begin{problem} \label{prob}
Consider the transition between $\cal T_w$ and $D_w$,
\[
\cal T_w=\sum_{x\leq w} c_{w,x} D_x,
\]
and  study how to compute $c_{w,x}\in {\bf Z}[v]\otimes{\bf Z}[P]$, $x\leq w$. 
\end{problem}

The coefficients  $C_{w,x}$ and $c_{w,x}$ can be related in a simple way, using the fact that the Demazure character is the sum of all the lower Demazure atoms. We give a proof of this fact below using a result in \cite{BBL}.

\begin{lemma}
$\partial_w=\sum_{x\leq w}D_x$ and $D_w=\sum_{x\leq w} (-1)^{\ell(w)-\ell(x)} \partial_x$.
\end{lemma}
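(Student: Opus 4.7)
The two identities are equivalent by Möbius inversion on the Bruhat order, invoking Deodhar's formula $\mu(x,w)=(-1)^{\ell(w)-\ell(x)}$ (the already-cited \cite{D1}), so my plan is to establish the first identity $\partial_w = \sum_{x \leq w} D_x$ as an operator identity on $\CC[P]$ and then derive the second by inversion.

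I would argue by induction on $\ell(w)$. The base case $w = e$ is immediate, since $\partial_e = 1 = D_e$. For the inductive step, fix a reduced factorization $w = s_i u$ with $\ell(w) = \ell(u)+1$, so that $\partial_w = \partial_i\,\partial_u$. Substituting $\partial_i = 1 + D_i$ together with the inductive hypothesis $\partial_u = \sum_{y \leq u} D_y$ yields
\[
\partial_w \;=\; \sum_{y \leq u} D_y \;+\; \sum_{y \leq u} D_i D_y.
\]
From $\partial_i^2 = \partial_i$ one checks that $D_i^2 = -D_i$, which together with the braid relations for the $D_i$ already noted in the text gives the ``atom product'' rule
\[
D_i D_y \;=\;
\begin{cases}
D_{s_i y}, & s_i y > y,\\
-D_y, & s_i y < y.
\end{cases}
\]
Plugging this in, the terms with $s_i y < y$ cancel between the two sums, leaving
\[
\partial_w \;=\; \sum_{\substack{y \leq u \\ s_i y > y}} \bigl(D_y + D_{s_i y}\bigr).
\]

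The final step is combinatorial. I would invoke the lifting property of the Bruhat order: since $s_i w < w$, for any $x \in W$ one has $x \leq w$ iff $x \leq u$ or $s_i x \leq u$, and each pair $\{x, s_i x\}$ is either contained in $[e,w]$ or disjoint from it. A short case analysis on the sign of $s_i$ at $x$ then identifies
\[
\{y : y \leq u,\ s_i y > y\} \;\sqcup\; \{s_i y : y \leq u,\ s_i y > y\} \;=\; [e, w],
\]
a disjoint union whose two pieces are precisely the elements $x \leq w$ with $s_i x > x$ and with $s_i x < x$, respectively. This rewrites the displayed sum as $\sum_{x \leq w} D_x$ and closes the induction; the second identity of the lemma then drops out by Möbius inversion.

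I expect the main obstacle to be precisely this combinatorial identification. The lifting property itself is classical, but verifying that the two halves of the claimed union exhaust $[e,w]$ without overlap takes a careful case split using the subword characterization of Bruhat order. Everything else in the argument is purely formal algebra, flowing from $\partial_i = 1 + D_i$, the quadratic relation $D_i^2 = -D_i$, and the braid relations for the $D_i$; in particular, no input from the Hecke-algebra action or from the Whittaker functions themselves is required.
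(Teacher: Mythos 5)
Your proof is correct, but it takes a genuinely different route from the paper's. The paper deduces the first identity in one step from \cite[Theorem~6]{BBL}: writing $\mathfrak{D}_{\mathfrak{w}}=\sum_{x\le w}P_{\mathfrak{w},x}(v)\,\cal T_x$, where $P_{\mathfrak{w},x}$ is the Poincar\'e polynomial of a Bott--Samelson fibre, and specializing $v\to 0$ using $P_{\mathfrak{w},x}(0)=1$; the geometric input does all the work. You instead give a self-contained induction on $\ell(w)$ using only $\partial_i=1+D_i$, the atom product rule (which is exactly the paper's relation \eqref{dem}, so you need not rederive it from $D_i^2=-D_i$ and the braid relations, though that derivation is also fine), and the lifting property --- which is precisely the paper's Lemma~\ref{lem}, the $Z(s,w_1,w_2)$ property, applied with $w_1=y$, $w_2=u=s_iw$. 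The case split you flag as the main obstacle does go through: $A=\{y\le u: s_iy>y\}$ and $s_iA$ are disjoint by the sign of $s_i$, both lie in $[e,w]$ by Lemma~\ref{lem}, and conversely any $x\le w$ lands in $A$ or $s_iA$ according to whether $s_ix>x$ or $s_ix<x$, again by Lemma~\ref{lem}. The trade-off is clear: the paper's argument is a one-liner modulo the cited geometry, whereas yours is longer but purely combinatorial--algebraic and independent of the Bott--Samelson result; both obtain the second identity the same way, by M\"obius inversion for the Bruhat order from \cite{D1}.
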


\begin{proof}
$\partial_i$, $i\in I$ are the specialization of 
\[
\mathfrak{D}_i:=\cal T_i +1=(1-ve^{-a_i})\partial_i
\] at $v\to 0$. Let $\mathfrak{w}$ be a reduced expression of $w$, and define $\mathfrak{D}_\mathfrak{w}$ in the obvious way. By \cite[Theorem 6]{BBL} one has
\[
\mathfrak{D}_\mathfrak{w}=\sum_{x\leq w}P_{\mathfrak{w}, x}(v)\cal T_x,
\]
where $P_{x,\mathfrak{w}}$ is the Poincar\'e polynomial of fibre of the Bott--Samelson resolution $Z_\mathfrak{w}\to X_w$ over the open cell
$Y_x=BxB/B$. Specializing $v\to 0$ gives that
\[
\partial_w=\sum_{x\leq w}P_{x,\mathfrak{w}}(0)D_x=\sum_{x\leq w}D_x.
\]
 \end{proof}
 
 \begin{cor} \label{simple}
 $c_{w,x}=\sum_{x\leq y\leq w}C_{w,y}$ and $C_{w,x}=\sum_{x\leq y\leq w}(-1)^{\ell(y)-\ell(x)}c_{w,y}.$
 \end{cor}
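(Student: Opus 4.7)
The plan is essentially a one-step substitution followed by Möbius inversion, making direct use of the preceding Lemma. I would start from the defining expansion $W_{w,\lambda}=\sum_{y\leq w}C_{w,y}\,\partial_{y,\lambda}$, substitute the first identity of the Lemma $\partial_{y,\lambda}=\sum_{x\leq y}D_{x,\lambda}$, and swap the order of summation to obtain
\[
W_{w,\lambda}=\sum_{y\leq w}C_{w,y}\sum_{x\leq y}D_{x,\lambda}=\sum_{x\leq w}\Bigl(\sum_{x\leq y\leq w}C_{w,y}\Bigr)D_{x,\lambda}.
\]
Comparing with the expansion $W_{w,\lambda}=\sum_{x\leq w}c_{w,x}D_{x,\lambda}$ coming from $\mathcal T_w=\sum c_{w,x}D_x$ then yields the first identity $c_{w,x}=\sum_{x\leq y\leq w}C_{w,y}$, provided I can equate coefficients.

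For the second identity, I would invoke Möbius inversion on the Bruhat order of $W$. By Deodhar's theorem \cite{D1}, the Möbius function on any Bruhat interval is given by the clean formula $\mu(x,y)=(-1)^{\ell(y)-\ell(x)}$. Applying this to invert the first identity produces the stated expression for $C_{w,x}$ immediately.

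The only delicate point is the step of equating coefficients of $D_{x,\lambda}$ in two different expansions. This requires linear independence of the Demazure atoms $\{D_{x,\lambda}\}_{x\leq w}$, which I would justify by noting that for $\lambda$ sufficiently regular dominant, $D_{x,\lambda}$ has $e^{x\lambda}$ as its unique highest-weight term, and the weights $x\lambda$ for $x\in W$ are all distinct. Alternatively, one can bypass this step entirely by viewing both sides as operators: the Lemma gives an operator identity $\partial_w=\sum_{x\leq w}D_x$, so the same substitution proves the analogous operator identity $\mathcal T_w=\sum_{x\leq w}(\sum_{x\leq y\leq w}C_{w,y})D_x$, and uniqueness of the $c_{w,x}$ at the operator level closes the argument. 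Either way, I do not anticipate any real obstacle; the full proof should fit in a few lines once the Lemma is in hand.
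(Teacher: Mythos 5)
Your argument is correct and is essentially the paper's intended one: the corollary is stated as an immediate consequence of the lemma $\partial_w=\sum_{x\leq w}D_x$, obtained exactly by the substitution and interchange of summation you describe, with the second identity following either by M\"obius inversion on the Bruhat order (as you do, and as the paper does elsewhere citing \cite{D1}) or equivalently by substituting the lemma's second relation $D_y=\sum_{x\leq y}(-1)^{\ell(y)-\ell(x)}\partial_x$ into $\mathcal T_w=\sum_y c_{w,y}D_y$. Your remark on justifying the comparison of coefficients (via the operator-level identity or regular dominant $\lambda$) is a reasonable and slightly more careful treatment of a point the paper leaves implicit.
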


By the reduction made above, the computation of the coefficients $\widetilde{C}_{w,x}$ or $C_{w,x}$ is  equivalent to the computation of the coefficients $c_{w,x}$, for $x\leq w$. Hence we will focus on Problem \ref{prob} from now on. 
 
Note that the operators $D_i$  are twisted derivations in the sense that
\begin{equation}\label{der}
D_i(fg)=D_i(f)\cdot g +\sigma_i(f)\cdot D_i(g),\quad f, g\in {\bf Z}[P].
\end{equation}
In fact the last equation is the specialization at $v\to 0$ of 
\begin{equation}\label{der2}
\cal T_i(fg)=(1-v)D_i(f)\cdot g +\sigma_i(f)\cdot \cal T_i(g),\quad f, g\in {\bf Z}[P].
\end{equation}

It is also known that $ T_w$, $w\in W$ satisfy the relation
\begin{equation}\label{hecke}
 T_i  \cdot T_w=\left\{\begin{array}{ll} T_{\sigma_i w}& \textrm{if } \sigma_iw>w,\\
(v-1)T_w+ v T_{\sigma_i w}& \textrm{if }\sigma_iw <w.\end{array}\right.
\end{equation}
For example one has the quadratic relation $ T_i^2=(v-1)T_i +v$, $i\in I$.
Specializing (\ref{hecke}) at $v\to 0$ gives
\begin{equation}\label{dem}
D_i \cdot D_w=\left\{\begin{array}{l} D_{\sigma_i w}\quad \textrm{if } \sigma_iw>w,\\
-D_w\quad \textrm{if }\sigma_iw <w.\end{array}\right.
\end{equation}

 \section{Induction steps}

 In this section we give some general inductive steps for later use. We  recall a well-known lemma from \cite{D1}, which is called $Z(s, w_1, w_2)$ property of the Bruhat order, and it will be used frequently in this paper.

\begin{lemma}\label{lem}
Let $s\in S$ be a simple reflection and $w_1, w_2\in W$. Assume that $w_1< sw_1$, $w_2< sw_2$. Then
\[
w_1\leq w_2 \Longleftrightarrow w_1\leq sw_2 \Longleftrightarrow sw_1\leq sw_2.
\]
\end{lemma}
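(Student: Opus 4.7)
\emph{Plan.} My approach would be to reduce everything to the subword characterization of the Bruhat order: $u \leq w$ if and only if some (equivalently, every) reduced expression for $w$ contains a subword that is a reduced expression for $u$. Throughout, the hypotheses $w_1 < sw_1$ and $w_2 < sw_2$ translate to $\ell(sw_i) = \ell(w_i)+1$, i.e., $s$ is not a left descent of either $w_1$ or $w_2$; in particular, any reduced expression of $w_2$ gives, upon prepending $s$, a reduced expression of $sw_2$.

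\textbf{Step 1: Prove $w_1\le w_2\iff w_1\le sw_2$.} The forward direction is immediate from transitivity, since $w_2\le sw_2$. For the converse, I would fix a reduced expression $w_2=s_{i_1}\cdots s_{i_k}$, so that $(s,s_{i_1},\ldots,s_{i_k})$ is a reduced expression for $sw_2$, and extract a subword that is a reduced expression for $w_1$. If this subword does not use the initial letter $s$, it lies inside the expression for $w_2$, yielding $w_1\le w_2$. If it does use the initial $s$, writing it as $(s,t_1,\ldots,t_{m-1})$ with $m=\ell(w_1)$, the tail $(t_1,\ldots,t_{m-1})$ has length $m-1$ and product $sw_1$; but $\ell(sw_1)=m+1$, contradicting the trivial inequality $\ell(\text{product})\le \text{length of expression}$. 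So the second case cannot occur, and $w_1\le w_2$.

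\textbf{Step 2: Prove $w_1\le w_2\iff sw_1\le sw_2$.} The direction $sw_1\le sw_2\Rightarrow w_1\le sw_2$ (and hence, by Step~1, $w_1\le w_2$) is again immediate from $w_1\le sw_1$. Conversely, given $w_1\le w_2$, take any reduced expression for $w_2$ and a reduced-subword expression for $w_1$ inside it. Prepending $s$ to both produces a reduced expression $(s,\ldots)$ for $sw_2$ containing the subword $(s,\ldots)$ of total length $\ell(w_1)+1=\ell(sw_1)$ with product $sw_1$, so this is a reduced subword, proving $sw_1\le sw_2$. Combining Steps 1 and 2 chains together to give the three-way equivalence stated in the lemma.

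\textbf{Main obstacle.} The only nontrivial point is the length bookkeeping in the converse of Step~1; everything else is either transitivity or a direct prepending construction. Since the argument is entirely combinatorial via the subword criterion, no Coxeter-group exchange condition is invoked beyond that hidden in the subword property itself. This is why the lemma is purely order-theoretic and applies uniformly to any Coxeter system, matching the usage made of it in later sections of the paper.
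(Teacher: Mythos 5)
Your proof is correct. Note that the paper itself offers no proof of this lemma: it is stated as the well-known ``lifting'' (or $Z(s,w_1,w_2)$) property and attributed to Deodhar \cite{D1}, so there is no argument in the text to compare against. Your derivation from the strong subword characterization of Bruhat order is the standard one and is complete: the key step --- ruling out, in the converse of Step 1, a reduced subword of $(s,s_{i_1},\dots,s_{i_k})$ for $w_1$ that uses the initial letter $s$, via the length count $\ell(sw_1)=\ell(w_1)+1$ versus a word of length $\ell(w_1)-1$ --- is exactly the point where the hypothesis $w_1<sw_1$ enters, and you use it correctly; the hypothesis $w_2<sw_2$ is what makes the prepended expression for $sw_2$ reduced in both steps. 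The argument is purely order-theoretic and valid for any Coxeter system, which is all the paper needs.
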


This lemma can be visualized using the diamond square in Figure~\ref{fig}, where the validities of the three dashed lines are all equivalent.

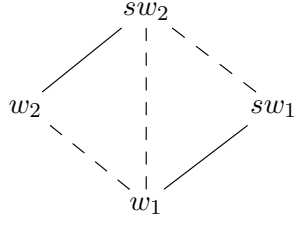
\begin{figure} 
\begin{displaymath} 
\xymatrix{ & sw_2 \ar@{--}[rd] \ar@{-}[ld] \ar@{--}[dd]  &\\ 
w_2 \ar@{--}[rd] & & sw_1 \ar@{-}[dl] \\ & w_1 & }
\end{displaymath}
\caption{$Z(s,w_1,w_2)$ property} \label{fig}
\end{figure}

The following lemma can be easily verified by using (\ref{dem}).

\begin{lemma}\label{lem2}
Let $\alpha\in\Pi$ be a simple root and $s=s_\alpha$. Then
\[
\cal T_s\cdot D_w=\left\{\begin{array}{ll} (1-ve^{-\alpha})D_{sw}-v e^{-\alpha}D_w& \textrm{if }sw>w,\\
-D_w & \textrm{if } sw<w.\end{array}\right.
\]
\end{lemma}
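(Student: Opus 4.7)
The plan is to reduce everything to the operator multiplication rule \eqref{dem} for Demazure atoms. The first step is to rewrite $\cal T_s$ purely in terms of $D_s$ and the identity: since $\partial_s = D_s + 1$, the defining formula $\cal T_s = (1-ve^{-\alpha})\partial_s - 1$ collapses to
\[
\cal T_s = (1-ve^{-\alpha})\,D_s - ve^{-\alpha}.
\]
Multiplying on the right by $D_w$ then gives
\[
\cal T_s \cdot D_w = (1-ve^{-\alpha})\,D_s\cdot D_w - ve^{-\alpha}\,D_w,
\]
so the computation is reduced to identifying the operator product $D_s \cdot D_w$.

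The second step is a direct case split using \eqref{dem}. If $sw > w$ then $D_s \cdot D_w = D_{sw}$, and substituting yields the first case of the lemma verbatim. If $sw < w$ then $D_s \cdot D_w = -D_w$, and the two resulting terms $-(1-ve^{-\alpha})D_w$ and $-ve^{-\alpha}D_w$ telescope to $-D_w$, as claimed.

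There is no substantial obstacle here: the argument is a purely algebraic manipulation inside the operator algebra acting on $\cal O_v$, and the only input beyond the definitions is the atom multiplication rule \eqref{dem}. In particular, neither the twisted derivation identity \eqref{der2} nor any finer Bruhat-theoretic information is needed — just the dichotomy $sw \gtrless w$ used in \eqref{dem}. The one point worth emphasizing is that all products appearing in the statement are to be read as compositions of operators on $\cal O_v$, which is precisely the setting in which \eqref{dem} was derived.
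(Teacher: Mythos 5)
Your proof is correct and is exactly the verification the paper has in mind when it says the lemma "can be easily verified by using (\ref{dem})": rewrite $\cal T_s=(1-ve^{-\alpha})D_s-ve^{-\alpha}$ via $\partial_s=D_s+1$ and apply the two cases of \eqref{dem}. No discrepancy with the paper's (omitted) argument.
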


\begin{lemma}\label{lem3}
Assume that the simple reflection $s=s_\alpha$ is a left ascent of $w$, i.e., $sw>w$. Then 
\begin{align*}
\cal T_{sw}&= \sum_{x\leq w}{\mathcal T}_s(c_{w,x})D_x+\sum_{x\leq w, \;x<sx}(1-ve^{-\alpha})s(c_{w,x}) D_{sx}
\\&-\sum_{x\leq w, \;x>sx}(1-ve^{-\alpha})s(c_{w,x})D_x.
\end{align*}
\end{lemma}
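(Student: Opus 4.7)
The plan is to compute $\cal T_{sw}=\cal T_s\cal T_w$ as operators on $\cO_v$ by substituting the inductive expansion $\cal T_w=\sum_{x\le w}c_{w,x}D_x$ and then moving $\cal T_s$ past each coefficient $c_{w,x}$ using the twisted derivation rule \eqref{der2}. Applied to $e^\lambda$ this yields
\[
\cal T_{sw}e^\lambda \;=\; \sum_{x\le w}\bigl[(1-v)D_s(c_{w,x})D_{x,\lambda}+s(c_{w,x})\cal T_s(D_{x,\lambda})\bigr],
\]
so the task reduces to expanding the pieces $\cal T_s(D_{x,\lambda})$ in the $D_y$-basis.

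Next I insert Lemma~\ref{lem2}, splitting the sum according to whether $sx>x$ or $sx<x$. For $sx>x$ the piece $s(c_{w,x})\cal T_s(D_{x,\lambda})$ contributes $(1-ve^{-\alpha})s(c_{w,x})D_{sx,\lambda}-ve^{-\alpha}s(c_{w,x})D_{x,\lambda}$; for $sx<x$ it contributes $-s(c_{w,x})D_{x,\lambda}$. The first of these is already the middle summand $\sum_{x\le w,\;x<sx}(1-ve^{-\alpha})s(c_{w,x})D_{sx}$ appearing in the statement of the lemma.

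To match the remaining $D_x$-terms with the first and third sums in the claim, I rewrite $(1-v)D_s(c_{w,x})$ using the elementary identity
\[
\cal T_s(f)=(1-v)D_s(f)-ve^{-\alpha}s(f),
\]
which follows directly from $\cal T_s=(1-ve^{-\alpha})\partial_s-1$, $D_s=\partial_s-1$, and the standard relation $(1-e^{-\alpha})\partial_s(f)=f-e^{-\alpha}s(f)$. With this, the coefficient of $D_x$ in the range $sx>x$ becomes $(1-v)D_s(c_{w,x})-ve^{-\alpha}s(c_{w,x})=\cal T_s(c_{w,x})$, and in the range $sx<x$ it becomes $(1-v)D_s(c_{w,x})-s(c_{w,x})=\cal T_s(c_{w,x})-(1-ve^{-\alpha})s(c_{w,x})$. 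Together with the middle sum identified above, this is exactly the asserted formula.

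The argument is essentially bookkeeping, and the one point that requires notice is the algebraic identity combining $(1-v)D_s$ and $ve^{-\alpha}s$ into $\cal T_s$; without it the three sums in the statement would not line up cleanly. No properties of the Bruhat order (in particular not Lemma~\ref{lem}) are needed at this stage—these enter only later, when one wishes to read off the explicit values of the coefficients $c_{sw,y}$ from the resulting operator identity.
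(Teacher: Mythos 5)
Your proof is correct and follows essentially the same route as the paper: apply $\cal T_s$ to $\cal T_w=\sum_{x\le w}c_{w,x}D_x$ via the twisted derivation rule \eqref{der2}, insert Lemma~\ref{lem2}, and regroup using the identities $(1-v)D_s-ve^{-\alpha}s=\cal T_s$ and $(1-v)D_s-s=\cal T_s-(1-ve^{-\alpha})s$, which are exactly the two equations the paper records at the end of its proof. Your closing remark that no Bruhat-order input (Lemma~\ref{lem}) is needed here is also accurate.
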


\begin{proof}
Applying $\cal T_s$ to the equation $\cal T_w=\sum_{x\leq w}c_{w,x}D_x$ and using (\ref{der2}) gives that
\[
\cal T_{sw}=\sum_{x\leq w} s(c_{w,x}) \cal T_s \cdot D_x +(1-v)D_s(c_{w,x})D_x.
\]
The lemma follows from inserting Lemma \ref{lem2} into the last equation, and also from noting that
\begin{align*}
&(1-v)D_s-ve^{-\alpha}s=(1-ve^{-\alpha})\partial_s-1={\mathcal T}_s,\\
&(1-v)D_s-s={\mathcal T}_s-(1-ve^{-\alpha})s;
\end{align*}
here the second equation is an immediate consequence of the first. 
\end{proof}

By comparing the coefficients in Lemma \ref{lem3} with $\cal T_{sw}=\sum_{x\leq sw} c_{sw, x}D_x$, we obtain the following inductive algorithm. 
\begin{prop}\label{ind}
Assume that $w<sw$, $s=s_\alpha\in S$, and that $x\leq sw$. Then

{\rm (i)} if $x\leq w$, $x<sx$, then
\[
c_{sw, x}={\mathcal T}_s (c_{w,x});
\]

{\rm (ii)} if $x\leq w$, $x>sx$, then
\[
c_{sw,x}=(1-ve^{-\alpha})s(c_{w,sx}-c_{w,x})+{\mathcal T}_s(c_{w,x});
\]

{\rm (iii)} if $x\not\leq w$, in which case $x>sx$, then 
\[
c_{sw,x}=(1-ve^{-\alpha}) s(c_{w,sx}).
\]
\end{prop}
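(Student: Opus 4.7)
The plan is to start from the identity for $\mathcal T_{sw}$ established in Lemma \ref{lem3} and to extract the coefficient of $D_x$ on both sides of $\mathcal T_{sw}=\sum_{x\le sw} c_{sw,x}D_x$. The first and third sums in Lemma \ref{lem3} are already indexed by $D_x$; the middle sum, however, is indexed by $D_{sx}$ with $x<sx$. The first technical step is therefore to reindex the middle sum via the substitution $y=sx$, rewriting it as
\[
\sum_{\substack{y:\ sy\le w,\\ sy<y}}(1-ve^{-\alpha})\,s(c_{w,sy})\,D_y.
\]
This yields a clean expansion of $\mathcal T_{sw}$ in the family $\{D_y:\ y\le sw\}$, from which the coefficients $c_{sw,x}$ can simply be read off by comparing with the expansion $\mathcal T_{sw}=\sum_{x\le sw} c_{sw,x}D_x$.

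Next I would treat the three cases in turn. For Case (i), with $x\le w$ and $x<sx$, only the first sum of Lemma \ref{lem3} contributes, since the reindexed middle sum and the third sum both require $x>sx$; this gives $c_{sw,x}=\mathcal T_s(c_{w,x})$. For Case (ii), with $x\le w$ and $x>sx$, all three sums contribute. One must note that $sx\le w$ holds automatically (by the standard Bruhat fact that $x\le w$ together with $sx<x$ implies $sx\le w$), so the term from the reindexed middle sum with $y=x$ is indeed present. Adding the three contributions yields the claimed formula $c_{sw,x}=\mathcal T_s(c_{w,x})+(1-ve^{-\alpha})\,s(c_{w,sx}-c_{w,x})$.

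The main point that requires care is Case (iii), where $x\not\le w$ but $x\le sw$; here one must establish that (a) $x>sx$, and (b) $sx\le w$. For (a), if instead $x<sx$, then Lemma \ref{lem} applied with $w_1=x$, $w_2=w$ (both satisfying the ascent hypothesis for $s$) would give $x\le sw\Longleftrightarrow x\le w$, contradicting $x\not\le w$. For (b), since $sx<x\le sw$ and both $sx$ and $w$ have $s$ as a left ascent, Lemma \ref{lem} yields $sx\le w\Longleftrightarrow sx\le sw$, and the latter is immediate. Consequently, the only contribution to the coefficient of $D_x$ comes from the reindexed middle sum (with $y=x$, $sy=sx\le w$), giving $c_{sw,x}=(1-ve^{-\alpha})\,s(c_{w,sx})$. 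The remaining observation that every $x\le sw$ falls into exactly one of (i)--(iii) is immediate once (a) is established. I expect no real obstacle beyond this Bruhat-theoretic bookkeeping in Case (iii).
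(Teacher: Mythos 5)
Your proposal is correct and follows exactly the paper's route: the paper obtains Proposition \ref{ind} by comparing coefficients in Lemma \ref{lem3} with the expansion $\cal T_{sw}=\sum_{x\leq sw}c_{sw,x}D_x$, which is precisely what you do. Your additional Bruhat-order verifications in case (iii) (that $x>sx$ and $sx\leq w$, via Lemma \ref{lem}) are correct details that the paper leaves implicit.
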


The three cases are illustrated in Figure~\ref{fig-2}. Note that in the last case we have either $x$ and $w$ incomparable, as depicted, or $x=sw>w=sx$.

\begin{figure}
\begin{displaymath}
\xymatrix{ & sw \ar@{-}[rd] \ar@{-}[ld] \ar@{-}[dd]  &\\ 
w \ar@{-}[rd] & & sx \ar@{-}[dl] \\ & x & }\qquad 
\xymatrix{ & sw  \ar@{-}[ld] \\ 
w \ar@{-}[rd] &  \\ & x \ar@{-}[dl] \\
sx &  }\qquad
\xymatrix{ & sw \ar@{-}[rd] \ar@{-}[ld]   &\\ 
w \ar@{-}[rd] & & x \ar@{-}[dl] \\ & sx & }
\end{displaymath}
\caption{(i)-(iii) of Proposition \ref{ind}} \label{fig-2}
\end{figure}
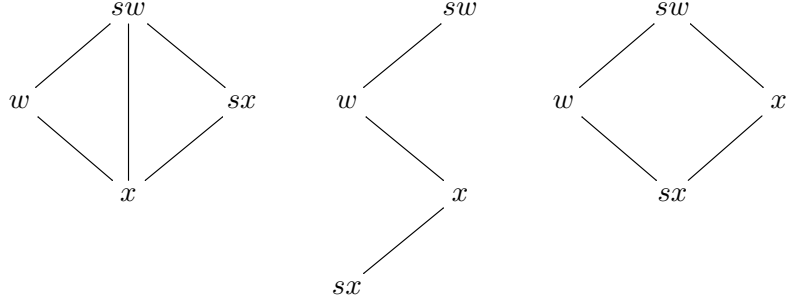

The following corollary is immediate by applying Proposition \ref{ind} (i) and (iii) recursively. Throughout, we let $\Phi_w:=\Phi_+\cap w\Phi_-$ be the inversion set of $w^{-1}$. 

\begin{cor} \label{cor} We have
\[ c_{w,e}={\mathcal T}_w(1) \quad \text{ and } \quad 
c_{w,w}=\prod_{\alpha\in \Phi_w}(1-v e^{-\alpha}).
\]
\end{cor}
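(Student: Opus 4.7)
\textbf{Proof plan for Corollary \ref{cor}.} The plan is to prove both identities simultaneously by induction on $\ell(w)$, with the base case $w=e$ being immediate: the expansion $\mathcal T_e=\mathrm{id}=\sum_{x\le e}c_{e,x}D_x$ forces $c_{e,e}=1$, which agrees with both $\mathcal T_e(1)=1$ and the empty product $\prod_{\alpha\in\Phi_e}(1-ve^{-\alpha})=1$.

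For the inductive step, I would pick a simple reflection $s=s_\alpha$ with $\ell(sw)=\ell(w)-1$ and set $w'=sw$, so that $sw'=w>w'$; this places us in the hypothesis of Proposition~\ref{ind}. For the formula $c_{w,e}=\mathcal T_w(1)$, I would take $x=e$ in Proposition~\ref{ind}(i): the conditions $e\le w'$ and $se=s>e$ hold trivially, so
\[
c_{w,e}=\mathcal T_s(c_{w',e})=\mathcal T_s\mathcal T_{w'}(1)=\mathcal T_w(1),
\]
the middle equality being the inductive hypothesis and the last one following from $\mathcal T_s\mathcal T_{w'}=\mathcal T_w$ (valid since $w=sw'$ is a reduced factorization and the $\mathcal T_i$ satisfy the braid relations).

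For the formula $c_{w,w}=\prod_{\gamma\in\Phi_w}(1-ve^{-\gamma})$, I would take $x=w$. Since $\ell(w)>\ell(w')$ we have $w\not\le w'$, so Proposition~\ref{ind}(iii) applies with $sw=w'<w$, giving $c_{w,w}=(1-ve^{-\alpha})\,s(c_{w',w'})$. By induction, $c_{w',w'}=\prod_{\beta\in\Phi_{w'}}(1-ve^{-\beta})$, so $s(c_{w',w'})=\prod_{\beta\in\Phi_{w'}}(1-ve^{-s\beta})$. The remaining identity needed is
\[
\Phi_w=\{\alpha\}\sqcup s_\alpha(\Phi_{w'})\qquad\text{(disjoint union)},
\]
which will immediately yield $c_{w,w}=\prod_{\gamma\in\Phi_w}(1-ve^{-\gamma})$.

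The one non-mechanical step is verifying this inversion-set identity, but it is standard: from $sw'>w'$ one deduces ${w'}^{-1}\alpha>0$, whence $\alpha\in\Phi_w$; and since $s_\alpha$ permutes $\Phi_+\setminus\{\alpha\}$, the map $\beta\mapsto s_\alpha\beta$ gives a bijection between $\Phi_{w'}$ and $\Phi_w\setminus\{\alpha\}$, as one checks directly from the definition $\Phi_u=\Phi_+\cap u\Phi_-$. Cardinalities match because $\ell(w)=\ell(w')+1$. Thus the main obstacle is really just careful bookkeeping around this well-known identity; every other ingredient is a direct specialization of Proposition~\ref{ind}.
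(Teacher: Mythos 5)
Your proof is correct and is precisely the recursion the paper has in mind: the paper states the corollary as "immediate by applying Proposition~\ref{ind} (i) and (iii) recursively," and your induction on $\ell(w)$, using case (i) with $x=e$ and case (iii) with $x=w$ together with the standard identity $\Phi_{s_\alpha w'}=\{\alpha\}\sqcup s_\alpha(\Phi_{w'})$, is exactly that argument spelled out.
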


\section{Good words and shellability of Bruhat order}

\subsection{Good words}\label{goodword} Following \cite{BN}, we consider the notion of a {good word}. Assume that $x\leq w$, and
introduce the sets
\begin{equation}\label{set}
S(x,w):=\{\alpha\in\Phi_+ \, | \,  x\leq ws_\alpha<w\},\quad R(x,w)=\{s_\alpha \,|\, \alpha\in S(x,w)\}.
\end{equation}
Deodhar's inequality states that 
\begin{equation}\label{deo}
\#S(x,w)=\#R(x,w)\geq \ell(w)-\ell(x)\,,
\end{equation}
with equality holding if the Kazhdan--Lusztig polynomial $P_{w_\circ w,w_\circ x}=1$, or equivalently if the Schubert variety $X_{w_\circ x}$ is rationally smooth at the $T$-fixed point $e_{w_\circ w}$ (see \cite{BL}). We remark that $\#S(x,w)$ has the trivial upper bound $\ell(w)$ because of the inclusion $S(x,w)\subset \Phi_{w^{-1}}=\Phi_+\cap w^{-1}\Phi_-$, where the last set is the inversion set of $w$, of cardinality $\ell(w)$; indeed, it is well known that $\alpha\in\Phi_+$ is an inversion of $w$, i.e., $w\alpha\in\Phi_-$, if and only if $ws_\alpha<w$.

For any reduced expression $\mathfrak{w}=s_1\cdots s_n$ of $w$, let $\lambda_{x,\mathfrak{w}}$ be the set of integers $i\in [1,n]$ such that $x\leq s_1\cdots \hat{s}_i\cdots s_n$. Let $\alpha_i\in\Pi$ be such that $s_i=s_{\alpha_i}$, $i=1,\ldots, n$. Then there are bijections
\[
\lambda_{x,\mathfrak{w}}\to S(x,w)\to R(x,w),\quad i\mapsto \gamma_i:=s_n\cdots s_{i+1}\alpha_i\mapsto s_{\gamma_i}=s_n \cdots s_{i+1}s_i s_{i+1}\cdots s_n.
\]
Moreover it is clear that $ws_{\gamma_i}=s_1\cdots \hat{s}_i\cdots s_n$.
By abuse of notation, we also write
\begin{equation}\label{gc}
\lambda_{x,\mathfrak{w}}=( i_1,\ldots, i_d)\in{\bf N}^d
\end{equation}
for the vector formed by elements of $\lambda_{x,\mathfrak{w}}$
 arranged in ascending order $i_1< \cdots <i_d$. Then $\mathfrak{w}$ is called a \textit{good word} for $x$ if 
\begin{equation}\label{good}
x=s_1\cdots \hat{s}_{i_1}\cdots \hat{s}_{i_d}\cdots s_n.
\end{equation}
Since $d=\#\lambda_{x,\mathfrak{w}}\geq \ell(w)-\ell(x)$, a good word exists only if (\ref{good}) is a reduced expression hence $d=\ell(w)-\ell(x)$.  Conversely, it is conjectured in \cite{BN} that if $W$ is simply-laced and $d=\ell(w)-\ell(x)$, then $w$ has a good word for $x$.
This conjecture is proved in [\textit{loc. cit.}] for $W=A_4$ or $D_4$ using {\sc Sage}, and it is shown to be false in non-simply-laced case, e.g. for $W=B_2$.

% \medskip

\subsection{Shellability} We recall the lexicographic shellability of Bruhat order, following \cite{BW}. For $x, y\in W$, we say that $y$ \textit{covers} $x$, denoted by $y\to x$, if $y>x$ and there is no $z\in W$ such that $y>z>x$. In this case $\ell(y)=\ell(x)+1$ and there is a unique $\alpha\in {\Phi}_+$ such that $s_\alpha y=x$. Moreover for any reduced expression $y=s_1\ldots s_l$, there is a unique $1\leq i\leq l$ such that $x=s_1\cdots \hat{s}_i\cdots s_l$, and one has $\alpha=s_1\cdots s_{i-1}\alpha_i$. We may also write $y\stackrel{\alpha}{\to }x$ to specify the reflection $s_\alpha$ that takes $y$ to $x$.

Consider $x\leq w$ and the Bruhat interval $[x,w]:=\{ y\in W| x\leq y\leq w\}$. Then all maximal chains $\scr{C}: w=w_0\to w_1\to \cdots \to w_d=x$ of $[x,w]$ have the same length $d=\ell(w)-\ell(x)$. Let us describe a labeling of the maximal chains of $[x,w]$. Fix once for all a reduced expression $\mathfrak{w}=s_1\cdots s_n$ of $w$. For a maximal chain ${\scr{C}}$ of $[x,w]$ as above, there is a unique sequence $i_1, \cdots, i_d$ of distinct integers in $[1,n]$ such that $w_k$ is obtained by removing $s_{i_1},\cdots, s_{i_k}$ from $\mathfrak w$, $k=1,\ldots, d$. In particular this implies that the resulting subwords representing $w_k$'s are all reduced. Then we assign the label 
\begin{equation}\label{lab}
\lambda({\scr{C}})=(\lambda_1({\scr{C}}),\ldots,\lambda_d({\scr{C}})):= (i_1,\ldots, i_d)\in {\bf N}^d.
\end{equation}

Recall that the \textit{lexicographic order} of ${\bf N}^d$ is the linear ordering $<_L$ such that ${\bf a}=(a_1,\ldots, a_d)<_L{\bf b}=(b_1,\ldots, b_d)$ if $a_i<b_i$ in the first coordinate where they differ. The main result of \cite{BW} states that $[x,w]$ is lexicographically shellable. In particular this implies that 

(i) there is a unique maximal chain ${\scr{C}}^+_{x,\mathfrak{w}}$ in $[x,w]$ whose label $\lambda({\scr{C}}^+_{x,\mathfrak{w}})$ is increasing, i.e., $\lambda_1({\scr{C}}^+_{x,\mathfrak{w}})<\cdots <\lambda_d({\scr{C}}^+_{x,\mathfrak{w}})$; 

(ii) $\lambda({\scr{C}}^+_{x,\mathfrak{w}})<_L \lambda({\scr{C}})$ for any other maximal chain ${\scr{C}}$ of $[x,w]$. 
\\
Note that the maximal chain ${\scr{C}}^+_{x,\mathfrak{w}}$ depends on the choice of the reduced word $\mathfrak{w}$ which we fix from the beginning. 

Similarly, consider the reduced word $s_n\cdots s_1$ of $w^{-1}$. By applying shellability to $w^{-1}$ with this reduced word and reverting to $w$, we see that 

(i$'$) there is a unique maximal chain ${\scr{C}}^-_{x,\mathfrak{w}}$ in $[x,w]$ whose label $\lambda({\scr{C}}^-_{x,\mathfrak{w}})$ is decreasing, i.e., $\lambda_1({\scr{C}}^-_{x,\mathfrak{w}})> \cdots >\lambda_d({\scr{C}}^-_{x,\mathfrak{w}})$; 

(ii$'$) $\lambda({\scr{C}}^-_{x,\mathfrak{w}})>_L \lambda({\scr{C}})$ for any other maximal chain ${\scr{C}}$ in $[x,w]$.

\section{Main Result} \label{sec-main}

In this section we compute the coefficient $c_{w,x}$, for $x\leq w$, under either of the following two conditions for the pair $(w,x)$:

\medskip

(A) $w$ admits a reduced word $\mathfrak w$ such that $\lambda_{x, \mathfrak w}=\lambda({\scr{C}}^{-}_{x, \mathfrak w})^*=(i_1,\ldots, i_d)$;

(B) $w$ admits a reduced word $\mathfrak w$ such that $\lambda(\scr{C}_{x,\mathfrak w}^+)=
\lambda(\scr{C}_{x,\mathfrak w}^-)^*=(i_1,\ldots, i_d)$.

\medskip

Here we  write $\lambda^*=(i_d,\ldots, i_1)\in{\bf N}^d$ for a vector $\lambda=(i_1,\ldots, i_d)\in {\bf N}^d$.
Note that the reduced word $\mathfrak w$ satisfying Condition (A) is necessarily a good word for $x$. 

As we will prove, both conditions guarantee that only the relations in Proposition~\ref{ind}~(i) and (iii) are used in the recursive computation of $c_{w,x}$; these relations have the advantage of being simple, compared with the relation in part (ii).  

\subsection{Lemmas on good words and shellability} We first prove a few more facts regarding combinatorial properties of a reduced word.

\begin{lemma}\label{6.1}
Assume that $\mathfrak{w}=s_1\cdots s_n$ is a good word of $w$ for $x$ such that $\lambda_{x,\mathfrak w}=(i_1, \ldots, i_d)$ with $i_1>1$. Then 

{\rm (i)} $x\not\leq s_1 w\,;$

{\rm (ii)} $S(s_1x, s_1w)=S(x,w)\,;$

{\rm (iii)} $s_1\mathfrak{w}:=s_2\cdots s_n$ is a good word of $s_1 w$ for $s_1 x$ and 
$\lambda_{s_1x, s_1\mathfrak{w}}=(i_1-1,\ldots, i_d-1)\,$.
\end{lemma}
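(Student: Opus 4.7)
The plan is to prove (i) by unwinding the definition of $\lambda_{x,\mathfrak{w}}$, then establish (iii) by left-multiplying by $s_1$ and translating Bruhat comparisons through Lemma~\ref{lem}, with (i) handling the harder case. Part (ii) will then be immediate from (iii) via the bijection $\lambda_{x,\mathfrak{w}}\to S(x,w)$ introduced in Section~\ref{goodword}.

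For (i), the hypothesis $i_1>1$ says precisely that $1\notin\lambda_{x,\mathfrak{w}}$, which by definition of $\lambda_{x,\mathfrak{w}}$ means $x\not\leq s_2\cdots s_n=s_1w$, the last equality using that $\mathfrak w$ is reduced.

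For (iii), note that $i_1>1$ forces the reduced expression $x=s_1\cdots\hat{s}_{i_1}\cdots\hat{s}_{i_d}\cdots s_n$ to begin with $s_1$; hence $s_1x<x$ and $s_1x=s_2\cdots\hat{s}_{i_1}\cdots\hat{s}_{i_d}\cdots s_n$ is a reduced expression, obtained from the reduced word $s_1\mathfrak{w}:=s_2\cdots s_n$ for $s_1w$ by deleting positions $i_1-1,\ldots,i_d-1$. Therefore (iii) reduces to verifying $\lambda_{s_1x,s_1\mathfrak{w}}=(i_1-1,\ldots,i_d-1)$. Setting $u_k:=s_1\cdots\hat{s}_k\cdots s_n$ and $v_k:=s_2\cdots\hat{s}_k\cdots s_n$, so that $u_k=s_1v_k$ in $W$, this amounts to proving for each $k\in\{2,\ldots,n\}$ the equivalence
\[
s_1x\leq v_k\iff x\leq u_k.
\]
Since $s_1\ne e$, exactly one of $u_k<v_k$ or $v_k<u_k$ holds. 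In \textbf{Case A}, when $v_k<u_k$, Lemma~\ref{lem} applied with $w_1=s_1x$, $w_2=v_k$, $s=s_1$ gives $s_1x\leq v_k\iff s_1x\leq u_k\iff x\leq u_k$, which is the desired equivalence. In \textbf{Case B}, when $u_k<v_k$, the element $v_k$ is the product of a subword of the reduced expression $s_1\mathfrak{w}$, so $v_k\leq s_1w$; combined with (i) this forces $x\not\leq v_k$, and hence also $x\not\leq u_k$ since $u_k<v_k$. Meanwhile, Lemma~\ref{lem} with $w_1=s_1x$, $w_2=u_k$, $s=s_1$ yields $s_1x\leq v_k\iff x\leq v_k$, which is false. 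Thus both sides of the desired equivalence are false in Case~B.

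Finally, (ii) follows from (iii) through the bijection $\lambda_{x,\mathfrak{w}}\to S(x,w)$, $i\mapsto\gamma_i=s_n\cdots s_{i+1}\alpha_i$, together with the analogous bijection for $(s_1x,s_1w)$ with reduced word $s_1\mathfrak{w}$, which sends $j\mapsto s_n\cdots s_{j+2}\alpha_{j+1}$; evaluating at $j=i_p-1$ produces $\gamma_{i_p}$, so the two image sets coincide. The main obstacle is Case~B of (iii): Lemma~\ref{lem} naturally outputs an equivalence involving $v_k$ rather than $u_k$, and closing the gap requires part (i) together with the standard fact that the product of any subword of a reduced expression is bounded above by the element in Bruhat order.
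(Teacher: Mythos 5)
Your proof is correct, but it inverts the logical structure of the paper's argument. The paper treats (ii) as the core statement: it shows the containment $S(s_1x,s_1w)\subseteq S(x,w)$ by two applications of the $Z(s,w_1,w_2)$ property (Lemma~\ref{lem}), and then upgrades containment to equality by a counting argument via Deodhar's inequality, using the good-word hypothesis to know $\#S(x,w)=\ell(w)-\ell(x)$; part (iii) is then deduced from (ii). You instead prove (iii) directly, position by position in the reduced word: for each $k$ the equivalence $s_1x\leq v_k\iff x\leq u_k$ follows from Lemma~\ref{lem} in Case~A, and in Case~B from Lemma~\ref{lem} combined with part (i) and the subword property ($v_k\leq s_1w$), so that both sides are false. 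This yields the exact equality $\lambda_{s_1x,s_1\mathfrak w}=\lambda_{x,\mathfrak w}-1$ (both containments) with no appeal to Deodhar's inequality, and (ii) then drops out of the bijections $\lambda_{x,\mathfrak w}\to S(x,w)$ already set up in Section~\ref{goodword}. Your route is somewhat more elementary and self-contained; the paper's is shorter once Deodhar's inequality is taken as given, and it is the version of the argument that recurs in Lemma~\ref{6.4}. Your Case~B is the genuinely new ingredient relative to the Appendix's Lemma~\ref{lem:remove-one-node-lambda-result}, which under the weaker hypothesis $i_1>1$ only obtains the containment $\lambda_{s_1x,s_1\fw}\supseteq\lambda_{x,\fw}-1$; your use of (i) is exactly what closes the reverse containment under the good-word hypothesis.
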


\begin{proof}
Part (i) is obvious from the definition of good word. Part (iii) follows from (ii). To prove (ii), it suffices to show that $S(s_1x, s_1w)$ is contained in $S(x,w)$, which implies that
$S(s_1x, s_1w)=S(x,w)$  because of Deodhar's inequality
\[
\#S(s_1x,s_1w)\geq \ell(s_1w)-\ell(s_1x)=\ell(w)-\ell(x)=\#S(x,w).
\]
Take $\alpha\in S(s_1x, s_1w)$, i.e., $s_1x\leq s_1ws_\alpha< s_1w$. We claim that
$s_1ws_\alpha < ws_\alpha$. To the contrary, assume that $s_1ws_\alpha>ws_\alpha$. Then by Lemma \ref{lem} we have the diamond square
\begin{displaymath}
\xymatrix{ & s_1w s_\alpha \ar@{--}[rd] \ar@{-}[ld] \ar@{-}[dd]  &\\ 
ws_\alpha \ar@{--}[rd] & & x \ar@{-}[dl] \\ & s_1x & }
\end{displaymath}
where the two dashed lines follow from the middle vertical line.
This implies that $x\leq s_1ws_\alpha <s_1 w$, a contradiction to part (i). Hence $s_1ws_\alpha < ws_\alpha$, and using Lemma \ref{lem} again we obtain the diagram
\begin{displaymath}
\xymatrix{ & w \ar@{-}[rd] \ar@{-}[ld]  & &\\ 
s_1w\ar@{-}[rd] & & ws_\alpha \ar@{-}[dl]  \ar@{--}[dr] \\ & s_1ws_\alpha & & x \\
& &  s_1x \ar@{-}[ul] \ar@{-}[ur]& }
\end{displaymath}
which implies that $\alpha\in S(x,w)$.
\end{proof}

\begin{lemma}\label{6.2}
Let $\mathfrak{w}=s_1\cdots s_n$ be a fixed reduced word of $w$,
$\lambda(\scr{C}^+_{x,\mathfrak{w}})=(i_1,\ldots, i_d)$, $\lambda(\scr{C}^-_{x,\mathfrak{w}})=
(j_d,\ldots, j_1)$, where $i_1<\cdots<i_d$ and $j_1<\cdots <j_d$. Consider the reduced word $s_1\mathfrak{w}=s_2\cdots s_n$ of $s_1w$. Then

{\rm (i)}  if $i_1>1$, then  $x\not\leq s_1w$ and
\[
\lambda(\scr{C}^+_{s_1x, s_1\mathfrak{w}})=(i_1-1,\ldots, i_d-1);
\]

{\rm (ii)} if $j_1>1$, then 
\[
\lambda(\scr{C}^-_{s_1x, s_1\mathfrak{w}})=(j_d-1,\ldots, j_1-1);
\]

{\rm (iii)} if $i_1=1$, then
\[
\lambda(\scr{C}^+_{x,s_1\mathfrak w})=(i_2-1,\ldots, i_d-1);
\]

{\rm (iv)} if $j_1=1$, then $x<s_1x$ and 
\[
\lambda(\scr{C}^-_{x, s_1\mathfrak{w}})=(j_d-1,\ldots, j_2-1).
\]
\end{lemma}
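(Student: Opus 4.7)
The plan is to exploit shellability of $[x,w]$ in the form of the uniqueness of lex-extremal maximal chains: the lex-smallest (resp.\ lex-largest) maximal chain is the unique one with increasing (resp.\ decreasing) label. For each of the four parts I would (a)~lift the known chain $\scr{C}^\pm_{x,\mathfrak{w}}$ to an explicit maximal chain in the relevant new Bruhat interval inside $W$, relative to the word $s_1\mathfrak{w}$, (b)~read off its label, and (c)~identify it by uniqueness of the appropriate extremal chain.

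The mechanical tool is: if an index $i\in[1,n]$ does not appear among the positions removed along a chain, then $s_i$ survives at that position in every element of the chain. In parts~(i) and~(ii), the assumptions $i_1>1$ and $j_1>1$ say position~$1$ is never removed, so each element $w_k$ along $\scr{C}^\pm_{x,\mathfrak{w}}$ has its displayed reduced word starting with $s_1$; consequently $s_1 w_k<w_k$ with length drop~$1$, and $s_1 w_k$ is obtained from $s_1\mathfrak{w}$ by removing the same positions shifted by $-1$. The covers $w_{k-1}\to w_k$ transfer to covers $s_1 w_{k-1}\to s_1 w_k$ by the subword property and the uniform length shift, producing a chain in $[s_1 x, s_1 w]$ with the claimed label (increasing in~(i), decreasing in~(ii)), which shellability then identifies as $\scr{C}^+_{s_1 x, s_1\mathfrak{w}}$ or $\scr{C}^-_{s_1 x, s_1\mathfrak{w}}$. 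For the assertion $x\not\leq s_1 w$ in~(i), I would argue by contradiction: otherwise shellability of $[x, s_1 w]$ would furnish an increasing chain from $s_1 w$ down to $x$ of length $d-1$, which prepended by the cover $w\to s_1 w$ at label~$1$ yields an increasing chain in $[x,w]$ with first label~$1$, contradicting $i_1>1$ via the uniqueness of $\scr{C}^+_{x,\mathfrak{w}}$. Part~(iii) is the easiest: when $i_1=1$, the first step of $\scr{C}^+_{x,\mathfrak{w}}$ is literally $w\to s_1 w$, so deleting it leaves an increasing chain from $s_1 w$ to $x$ with label $(i_2-1,\dots,i_d-1)$ relative to $s_1\mathfrak{w}$, which uniqueness identifies as $\scr{C}^+_{x, s_1\mathfrak{w}}$.

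Part~(iv) is the subtle one, and the main obstacle. The hypothesis $j_1=1$ puts position~$1$ at the \emph{last} step of $\scr{C}^-_{x,\mathfrak{w}}$: the displayed reduced word of $w_{d-1}$ starts with $s_1$ and removing it yields $x$, so $w_{d-1}=s_1 x$ with $\ell(s_1 x)=\ell(x)+1$, proving $x<s_1 x$. Lemma~\ref{lem} (the $Z$-property with $s=s_1$, $w_1=x$, $w_2=s_1 w$) together with $x\leq w$ yields $x\leq s_1 w$, so $[x, s_1 w]$ is nonempty. The first $d-1$ steps of $\scr{C}^-_{x,\mathfrak{w}}$ involve only positions $j_d,\dots,j_2>1$, so the same left-multiplication by $s_1$ transfer used in (i)--(ii) produces a chain $s_1 w\to s_1 w_1\to\cdots\to s_1 w_{d-1}=x$ in $[x, s_1 w]$ with decreasing label $(j_d-1,\dots,j_2-1)$ relative to $s_1\mathfrak{w}$; shellability identifies it as $\scr{C}^-_{x, s_1\mathfrak{w}}$.

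The main obstacle, beyond routine subword manipulation, is the asymmetry between (iii) and (iv): in (iii) the removed $s_1$ sits at the beginning of the chain and the transferred chain has length $d-1$ with target $x$ as its endpoint carried over directly; in (iv) the removed $s_1$ sits at the end, and the transferred chain still lands at $x$ (not at $s_1 x$) precisely because $s_1 w_{d-1}=s_1\cdot s_1 x=x$. Keeping straight which ambient interval ($[s_1 x, s_1 w]$ versus $[x, s_1 w]$) each case falls into, and the corresponding length accounting, is the principal care required.
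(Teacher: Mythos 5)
Your proposal is correct and follows essentially the same route as the paper: lift the extremal chains along left multiplication by $s_1$ (justified by position $1$ never being removed), read off the shifted labels, and invoke uniqueness of the increasing/decreasing chain from shellability, with the same prepend-the-cover contradiction for $x\not\leq s_1w$ in (i) and the same subchain arguments in (iii) and (iv). You supply slightly more detail than the paper (which labels the chain transfer in (i) as obvious), but the content is identical.
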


\begin{proof} Write $\scr{C}^\pm_{x,\mathfrak w}: w=w_0^\pm\to w_1^\pm\to \cdots \to w_d^\pm=x$.

(i) The last claim is clear since we have obviously a maximal chain 
\[
{\scr{C}}: s_1 w=s_1w^+_0 \to s_1  w^+_1 \to \cdots \to s_1 w^+_d =s_1 x
\]
of $[s_1x, s_1w]$ with increasing label $\lambda({\scr{C}})=(i_1-1,\ldots, i_d-1)$. We must have ${\scr{C}}=\scr{C}^+_{s_1x, s_1\mathfrak{ w}}$ because of the uniqueness of increasing label. It remains to prove that $x\not\leq s_1 w$. To the contrary, assume that $x\leq s_1w=s_2\cdots s_n$. Then  concatenation of $w\to s_1w$ with any maximal chain in $[x, s_1w]$ will give a maximal chain ${\scr{C}}$ in $[x, w]$ such that ${\scr{C}}<_L \scr{C}^+_{x,\mathfrak{w}}$, since $\lambda_1({\scr{C}})=1< \lambda_1(\scr{C}^+_{x,\mathfrak{w}})=i_1$. This is a contradiction. 

(ii) The proof is similar.

(iii) $\scr{C}^+_{x, s_1\mathfrak w}$ equals the following subchain of $\scr{C}^+_{x,\mathfrak w}$
\[
 s_1 w=w^+_1\to w^+_2\to \cdots \to w^+_d=x.
\]

(iv) $s_1x \rightarrow x$ is the last arrow in the chain $\scr{C}^-_{x,\mathfrak w}$ hence $x<s_1x$. The following subchain of $\scr{C}^-_{x,\mathfrak w}$
\[
w=w^-_0\to w^-_1\to \cdots \to w^-_{d-1}=s_1 x
\]
gives rise to the maximal chain of $[x, s_1 w]$
\[
\scr{C}: s_1w=s_1 w^-_0\to s_1 w^-_1\to \cdots\to s_1 w^-_{d-1}=x
\]
with decreasing label $\lambda(\scr{C})=(j_d-1,\ldots, j_2-1)$, which implies that $\scr{C}=\scr{C}^-_{x,s_1\mathfrak{w}}$.
\end{proof}

\begin{lemma}\label{6.3}
Assume that $\mathfrak{w}=s_1\cdots s_n$ is a good word of $w$ for $x$ such that $\lambda_{x, \mathfrak w}=\lambda(\scr{C}^-_{x,\mathfrak{w}})^*=( i_1, \dots , i_d)$ with $i_1=1$. 
Then 

{\rm (i)} $x<s_1x\,;$

{\rm (ii)} $S(x, s_1w)=S(x,w)\setminus\{\gamma_1\}$, where $\gamma_1=s_n\cdots s_2\alpha_1\,;$

{\rm (iii)} $s_1\mathfrak{w}=s_2\cdots s_n$
is a good word of $s_1w$ for $x\,;$

{\rm (iv)} $\lambda_{x,s_1\mathfrak{w}}=(i_2-1,\ldots, i_d-1)=\lambda(\scr{C}^-_{x,s_1\mathfrak{w}})^*\,$.
\end{lemma}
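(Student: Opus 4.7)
The plan is to leverage Lemma~\ref{6.2}(iv) for the parts involving the decreasing shellable chain, and to handle the $\lambda_{x,s_1\mathfrak{w}}$ computation via the Z-property (Lemma~\ref{lem}). First I would invoke Lemma~\ref{6.2}(iv) with $(j_1,\ldots,j_d)=(i_1,\ldots,i_d)$ (necessarily satisfying $i_1<\cdots <i_d$) and $j_1=i_1=1$, which simultaneously gives (i), namely $x<s_1x$, and the second equality in (iv), namely $\lambda(\scr{C}^-_{x,s_1\mathfrak{w}})^*=(i_2-1,\ldots,i_d-1)$. This reduces the lemma to establishing: (a) the first equality in (iv), namely $\lambda_{x,s_1\mathfrak{w}}=\{i_2-1,\ldots,i_d-1\}$; and then (iii), (ii) as bookkeeping consequences of (a).

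The key step is (a). Here I would record a standard consequence of Lemma~\ref{lem}: \emph{if $x<s_1x$, then $x\leq y \Leftrightarrow x\leq s_1 y$ for every $y\in W$}. Indeed, apply Lemma~\ref{lem} with $s=s_1$, $w_1=x$, and $w_2$ equal to whichever of $y$ and $s_1y$ has $s_1$ as a left ascent; in the latter case one uses $s_1\cdot s_1 y=y$ to identify the two sides of the equivalence. Applying this to $y$ equal to the group element of the word $s_2\cdots \hat{s}_{j+1}\cdots s_n$ (so that $s_1y$ is the element of $s_1s_2\cdots\hat{s}_{j+1}\cdots s_n$) yields
\[
j\in \lambda_{x,s_1\mathfrak{w}}\;\Longleftrightarrow\; j+1\in \lambda_{x,\mathfrak{w}}.
\]
Since $j+1\geq 2$ and $\lambda_{x,\mathfrak{w}}=\{1,i_2,\ldots,i_d\}$ by hypothesis, this forces $\lambda_{x,s_1\mathfrak{w}}=\{i_2-1,\ldots,i_d-1\}$, completing (iv).

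Given (iv), assertion (iii) is an immediate check: combining the good-word identity for $\mathfrak{w}$ with $i_1=1$ produces the reduced expression $x=s_2\cdots\hat{s}_{i_2}\cdots\hat{s}_{i_d}\cdots s_n$, which is precisely $s_1\mathfrak{w}=s_2\cdots s_n$ with the positions in $\lambda_{x,s_1\mathfrak{w}}=\{i_2-1,\ldots,i_d-1\}$ removed; together with the fact that $s_1\mathfrak{w}$ is a reduced expression for $s_1w$, this verifies the good-word condition. For (ii), I would translate between the bijection $\lambda_{x,\mathfrak{w}}\to S(x,w)$, $k\mapsto \gamma_k$, and its analog for $s_1\mathfrak{w}=t_1\cdots t_{n-1}$ with $t_j=s_{j+1}$, under which $j\in \lambda_{x,s_1\mathfrak{w}}$ maps to $s_n\cdots s_{j+2}\alpha_{j+1}=\gamma_{j+1}\in S(x,s_1w)$; combining with (iv) gives $S(x,s_1w)=\{\gamma_{i_2},\ldots,\gamma_{i_d}\}=S(x,w)\setminus\{\gamma_1\}$. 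The main obstacle is step (a), where the hypothesis of Condition (A)—via (i)—is used decisively through the Z-property; the remainder is either a direct appeal to Lemma~\ref{6.2}(iv) or routine index bookkeeping.
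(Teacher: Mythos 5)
Your proof is correct, but its internal logic runs in the opposite direction from the paper's for parts (ii)--(iv). The paper first establishes (ii) by invoking Lemma~\ref{6.4}: there one shows the inclusion $S(x,s_1w)\subset S(x,w)\setminus\{\gamma_1\}$ by diamond arguments and then upgrades it to an equality using Deodhar's inequality together with the good-word hypothesis $\#S(x,w)=\ell(w)-\ell(x)$; part (iii) and the first equality in (iv) are then read off from (ii) through the bijection $\lambda_{x,s_1\mathfrak{w}}\to S(x,s_1w)$. You instead prove the first equality in (iv) directly, obtaining the exact equivalence $j\in\lambda_{x,s_1\mathfrak{w}}\Leftrightarrow j+1\in\lambda_{x,\mathfrak{w}}$ from the lifting property $x\le y\Leftrightarrow x\le s_1y$ (valid whenever $x<s_1x$, by two applications of Lemma~\ref{lem}), and then deduce (iii) and (ii) by traversing the same bijection in the other direction. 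Your key step is sound --- it is in fact identical to the $j_1=1$ case of Lemma~\ref{lem:remove-one-node-lambda-result} in the Appendix, where Muthiah and Pusk\'as prove $\lambda_{x,s_1\mathfrak{w}}=(\lambda_{x,\mathfrak{w}}\setminus\{1\})-1$ by exactly this argument. Your route requires no counting argument and uses only $x<s_1x$ for the crucial step, which is more economical; what it does not yield is the more general Lemma~\ref{6.4} (stated for any $x<w$ with $sw<w$, $x<sx$, $\#S(x,w)=\ell(w)-\ell(x)$), which the paper isolates as being of independent interest. Both arguments agree in deriving (i) and the second equality in (iv) from Lemma~\ref{6.2}(iv).
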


\begin{proof}
Part (i) and the last equality in (iv) follow from Lemma \ref{6.2} (iv). Part (iii) and the first equality in (iv) are direct consequences of (ii). Finally (ii) follows from 
Lemma \ref{6.4} below, which is of independent interest. \end{proof}

\begin{lemma}\label{6.4}
Assume that $x<w$, $sw<w$ and $x<sx$, where $s=s_\alpha \in S$. If $\# S(x,w)=\ell(w)-\ell(x)$, then  
$S(x, sw)=S(x,w)\setminus \{-w^{-1}\alpha\}$.
\end{lemma}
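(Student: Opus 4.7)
The plan is to identify the root $\beta:=-w^{-1}\alpha$, which lies in $\Phi_+$ because $sw<w$ forces $w^{-1}\alpha\in\Phi_-$, and to observe that $ws_\beta=sw$. I will then: (i) show $\beta\in S(x,w)$; (ii) use the standard description of right inversion sets to conclude $\{\gamma\in\Phi_+:(sw)s_\gamma<sw\}=\{\gamma\in\Phi_+:ws_\gamma<w\}\setminus\{\beta\}$; and (iii) establish that, for each such $\gamma$, one has $x\le ws_\gamma\iff x\le (sw)s_\gamma$.

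For (i), I would apply Lemma~\ref{lem} with $w_1=x$ and $w_2=sw$: the hypotheses $x<sx$ and $sw<s(sw)=w$ put both elements in left-ascent position for $s$, so $x\le w\iff x\le sw$. The given $x\le w$ then yields $x\le ws_\beta<w$, i.e., $\beta\in S(x,w)$.

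For (iii), set $v:=ws_\gamma$, so that $(sw)s_\gamma=sv$, and split on whether $s$ is a left ascent or a left descent of $v$. In the first case ($sv>v$), Lemma~\ref{lem} with $(w_1,w_2)=(x,v)$ gives $x\le v\iff x\le sv$ at once. In the second case ($sv<v$), I would apply the same lemma to $(w_1,w_2)=(x,sv)$; both are in left-ascent position since $sv<s(sv)=v$, so $x\le sv\iff x\le v$. Combining (ii) and (iii) then yields the desired equality of sets.

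Interestingly, this plan does not appear to invoke the hypothesis $\#S(x,w)=\ell(w)-\ell(x)$, which suggests either a subtle use I am missing or that the hypothesis is included only to match the lemma's application in Lemma~\ref{6.3}. A more conservative alternative that uses the hypothesis naturally is to prove only the inclusion $S(x,w)\setminus\{\beta\}\subseteq S(x,sw)$ via one direction of step (iii), and then invoke Deodhar's inequality $\#S(x,sw)\ge\ell(sw)-\ell(x)=\ell(w)-\ell(x)-1=\#(S(x,w)\setminus\{\beta\})$ together with the hypothesis to force equality. I would draft the hypothesis-free version first, and carefully cross-check the reverse inclusion $S(x,sw)\subseteq S(x,w)\setminus\{\beta\}$, since this is where I expect any subtlety to hide — in particular, verifying that the case split on ascents/descents of $v$ really covers all $\gamma$ in the inversion set of $sw$.
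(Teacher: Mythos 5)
Your main argument is correct, and it takes a genuinely different route from the paper: you prove both inclusions directly, by combining the inversion-set identity $N(sw)=N(w)\setminus\{\beta\}$ (where $N(u)=\{\gamma\in\Phi_+: us_\gamma<u\}$ and $\beta=-w^{-1}\alpha$) with the lifting property in the form ``$x<sx$ implies $x\le v\iff x\le sv$ for every $v$'' (your two-case application of Lemma~\ref{lem} is exactly right, and the case split on $sv\gtrless v$ is exhaustive). This yields $S(x,sw)=\{\gamma\in N(w)\setminus\{\beta\}: x\le ws_\gamma\}=S(x,w)\setminus\{\beta\}$ with no use of the hypothesis $\#S(x,w)=\ell(w)-\ell(x)$ — so your suspicion is right, and your proof gives a slightly stronger statement. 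The paper instead proves only the one inclusion $S(x,sw)\subseteq S(x,w)\setminus\{\beta\}$ (by essentially your step (iii) diamonds, run in one direction only) and then forces equality by counting: Deodhar gives $\#S(x,sw)\ge\ell(sw)-\ell(x)$, while the hypothesis gives $\#\bigl(S(x,w)\setminus\{\beta\}\bigr)=\ell(w)-\ell(x)-1=\ell(sw)-\ell(x)$, so the inclusion must be an equality. One warning about your ``conservative alternative'': as you stated it, the counting goes the wrong way. If you only prove $S(x,w)\setminus\{\beta\}\subseteq S(x,sw)$, then Deodhar's inequality is a lower bound on the cardinality of the \emph{larger} set, which cannot force equality; the counting argument requires the opposite inclusion $S(x,sw)\subseteq S(x,w)\setminus\{\beta\}$, which is precisely the one the paper establishes. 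So keep your hypothesis-free version as the primary proof, and if you want a fallback, reverse the inclusion in the alternative.
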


\begin{proof}
Consider the following diamond given by Lemma \ref{lem}.
\begin{displaymath}
\xymatrix{ & w \ar@{--}[rd] \ar@{-}[ld] \ar@{-}[dd]   &\\ 
sw \ar@{--}[rd] & & sx \ar@{-}[dl] \\ & x & }
\end{displaymath}
Take $\beta\in S(x,sw)$, i.e., $sw>sws_\beta\geq x$. We claim that $\beta\in
S(x,w)$, i.e., $w>ws_\beta \geq x$. If $ws_\beta> sws_\beta$, then the claim is obvious, again by Lemma \ref{lem}. If $ws_\beta< sws_\beta$, then Lemma \ref{lem} gives the following diamond
\begin{displaymath}
\xymatrix{ & sws_\beta \ar@{--}[rd] \ar@{-}[ld] \ar@{-}[dd]   &\\ 
ws_\beta \ar@{--}[rd] & & sx \ar@{-}[dl] \\ & x & }
\end{displaymath}
Hence the claim follows. Obviously $\beta\neq -w^{-1}\alpha\in S(x,w)$, because 
$sws_{w^{-1}\alpha}=w>sw$. Therefore we get an inclusion 
$S(x,sw)\subset S(x,w)\setminus\{-w^{-1}\alpha\}$. This inclusion is an equality because of Deodhar's inequality 
\[
\# S(x,sw)\geq \ell(sw)-\ell(x)=\ell(w)-\ell(x)-1=\# S(x,w)-1,
\]
where the last equality follows from the assumption $\# S(x,w)=\ell(w)-\ell(x)$.
\end{proof}

\subsection{Main theorem} We can now apply previous lemmas together with Proposition \ref{ind} recursively to compute $c_{w,x}$, assuming Condition (A) or (B). As mentioned above, only cases (i) and (iii) of Proposition \ref{ind} show up in the computation. In order to formulate our main result, we introduce an additional notation. 

For any $\alpha\in \Phi$, let
\begin{equation}\label{da}
\partial_\alpha=\frac{1-e^{-\alpha}s_\alpha}{1-e^{-\alpha}},\quad {\mathcal T}_\alpha=(1-ve^{-\alpha})\partial_\alpha-1.
\end{equation}
 Using this notation, it is easy to see that we have 
 \begin{equation}\label{cross}
 w\cdot  \partial_\alpha=  \partial_{w\alpha}\cdot w,\quad
 w\cdot  {\mathcal T}_\alpha= {\mathcal T}_{w\alpha}\cdot w.
 \end{equation}
 
\begin{theorem} \label{thm-main}
Assume that either condition {\rm (A)} or {\rm (B)} holds. In either case, let
\[
\beta_i=s_1\cdots \hat{s}_{i_1}\cdots \hat{s}_{i_2}\cdots \alpha_i,\quad i=1,\ldots, n.
\]
 Then we have 
\[
c_{w,x}=(1-ve^{-\beta_1})\cdots {\mathcal T}_{\beta_{i_1}}\left(\cdots
{\mathcal T}_{\beta_{i_d}}\left(\cdots (1-ve^{-\beta_n})\right)\cdots\right)\,.
\] 
\end{theorem}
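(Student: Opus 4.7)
My plan is induction on $n = \ell(w)$, with trivial base $n = 0$ (both sides equal $1$). Given a reduced word $\mathfrak{w} = s_1 \cdots s_n$ of $w$ satisfying Condition (A) or (B), with associated label sequence $(i_1, \ldots, i_d)$, I split the argument according to whether $i_1 = 1$ or $i_1 > 1$ and apply either part~(i) or part~(iii) of Proposition~\ref{ind} with $s = s_1$. The key virtue of Conditions~(A) and~(B) is that the recursion never enters the awkward case~(ii), so everything is driven by the two clean formulas.

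Suppose first $i_1 > 1$. Lemma~\ref{6.1} (under~(A)) or Lemma~\ref{6.2}(i) (under~(B)) gives $x \not\le s_1 w$, hence by Proposition~\ref{ind}(iii),
\[
c_{w,x} = (1 - v e^{-\alpha_1})\, s_1\bigl(c_{s_1 w,\, s_1 x}\bigr).
\]
I then verify that the pair $(s_1 w, s_1 x)$ with the reduced word $s_2 \cdots s_n$ satisfies the same condition, now with shifted label sequence $(i_1 - 1, \ldots, i_d - 1)$: under~(A) this combines Lemma~\ref{6.1}(iii) with Lemma~\ref{6.2}(ii), while under~(B) it is exactly Lemma~\ref{6.2}(i,ii). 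Let $\beta'_j$ denote the roots associated with $s_2 \cdots s_n$. Since position~$1$ is not skipped, one checks from the definition that $\beta_{j+1} = s_1 \beta'_j$ and $\beta_1 = \alpha_1$. Substituting the inductive expression for $c_{s_1 w, s_1 x}$ and commuting $s_1$ inward via the intertwining relations $s_1 \mathcal{T}_{\beta'_j} = \mathcal{T}_{\beta_{j+1}} s_1$ and $s_1 (1 - v e^{-\beta'_j}) = (1 - v e^{-\beta_{j+1}}) s_1$ supplied by \eqref{cross} yields the claimed formula.

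Now suppose $i_1 = 1$. Lemma~\ref{6.3} (under~(A)) or Lemma~\ref{6.2}(iii,iv) (under~(B)) gives $x < s_1 x$, $x \le s_1 w$, and ensures that $(s_1 w, x)$ with the reduced word $s_2 \cdots s_n$ satisfies the relevant condition with label sequence $(i_2 - 1, \ldots, i_d - 1)$. Proposition~\ref{ind}(i) then yields
\[
c_{w,x} = \mathcal{T}_{\alpha_1}\bigl(c_{s_1 w,\, x}\bigr) = \mathcal{T}_{\beta_1}\bigl(c_{s_1 w,\, x}\bigr).
\]
In this case, hatting $\hat{s}_{i_1} = \hat{s}_1$ simply drops the leading $s_1$, so $\beta_{j+1} = \beta'_j$, and substituting the inductive formula for $c_{s_1 w, x}$ directly produces the desired identity.

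The only real obstacle is the combinatorial preservation of Conditions~(A) and~(B) under this inductive step, and that is precisely what Lemmas~\ref{6.1}--\ref{6.4} deliver; once these are in hand, the remainder is a bookkeeping check tracking the $\beta_j$'s via \eqref{cross}. The slightly more delicate half is the case $i_1 > 1$, where one has to commute the Weyl group action $s_1$ through an alternating composition of multiplicative factors and Demazure-Lusztig operators indexed by the $\beta'_j$'s.
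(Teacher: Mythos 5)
Your proposal is correct and follows essentially the same route as the paper: the same case split on $i_1=1$ versus $i_1>1$, the same appeals to Lemmas~\ref{6.1}--\ref{6.3} and Proposition~\ref{ind}(i),(iii) to show the conditions propagate, and the same use of \eqref{cross} to convert the $\alpha_i$'s into the $\beta_i$'s (the paper does all the commuting at the end of the iteration, whereas you fold it into each inductive step, but this is only a difference in bookkeeping). No gaps.
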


\begin{proof}
In either case we use recursion. First assume Condition (A). If $i_1>1$, then $(s_1w, s_1 x)$ satisfies Condition (A) as well, due to Lemma \ref{6.1} (iii) and Lemma \ref{6.2} (ii). Moreover, we may apply Proposition \ref{ind} (iii) because of Lemma \ref{6.1} (i), which gives that
\[
c_{w,x}=(1-ve^{-\alpha_1})s_1(c_{s_1w,s_1x}).
\]
If $i_1=1$, then $(s_1w, x)$  also satisfies (A) and we may apply Proposition \ref{ind} (i), due to Lemma \ref{6.3}, which gives that
\[
c_{w,x}={\mathcal T}_{\alpha_1}(c_{s_1w,x}).
\]
Iterating this process gives us
\begin{align*}
c_{w,x}&=(1-ve^{-\alpha_1})s_1\cdots {\mathcal T}_{\alpha_{i_1}}\left(\cdots {\mathcal T}_{\alpha_{i_d}}\left(\cdots 
(1-ve^{-\alpha_n})s_n(1)\right)\cdots\right)\,.
%\\&=(1-ve^{-\alpha_1})s_1\cdots {\mathcal T}_{\alpha_{i_1}}\left(\cdots {\mathcal T}_{\alpha_{i_d}}\left(\cdots (1-ve^{-\alpha_n})s_n(1)\right)\cdots\right).
\end{align*}
One may use (\ref{cross}) to push the reflections $s_i$, $1\leq i \leq n$, $i\neq i_1,\ldots, i_d$ across the operators ${\mathcal T}_{\alpha_{i_1}}$, $\ldots$, ${\mathcal T}_{\alpha_{i_d}}$, noting that $x(1)=1$.

The proof assuming Condition (B) is similar. If $i_1>1$, then by Lemma \ref{6.2} (i)-(ii),
$(s_1w, s_1x)$ also satisfies (B) and Proposition \ref{ind} (iii) applies. If $i_1=1$, then 
by Lemma \ref{6.2} (iii)-(iv), $(s_1w, x)$ satisfies (B) and Proposition \ref{ind} (i) applies.
 \end{proof}

\begin{rmks} (i) Note that in the special cases $d=n$ and $d=0$, we recover $c_{w,e}$ and $c_{w,w}$ respectively, as given by Corollary \ref{cor}.

(ii) The roots $\beta_i$ can be interpreted as follows. We have
\[
\{\beta_i: 1\leq i\leq n, i\neq i_1,\ldots, i_d\}=\Phi_x=\Phi_+\cap x\Phi_-.
\]
Moreover, under Condition (B), the roots $\beta_{i_1},\ldots, \beta_{i_d}$ give the sequence of reflections along the maximal chain $\scr{C}^+_{x,\mathfrak w}$ of $[x,w]$, i.e., we have
 \[
\scr{C}^+_{x,\mathfrak w}: w=w_0\stackrel{\beta_{i_1}}{\to} w_1\to \cdots \stackrel{\beta_{i_d}}{\to} w_d=x.
 \]
 Hence the calculation of $c_{w,x}$ amounts to inserting the operators 
 \[
 {\mathcal T}_\beta=(1-ve^{-\beta})\partial_\beta-1, \quad \beta\in \{\beta_{i_1},\ldots, \beta_{i_d}\}
 \]
  into the product $\prod_{\alpha\in\Phi_x}(1-ve^{-\alpha})$ in a natural, combinatorial way.
\end{rmks}

\subsection{Conditions (A) and (B)}\label{cond-a-b} Since these conditions are essential for our main result, we now discuss them in more detail. We start with an example.

\begin{exa}
Consider $w=s_1s_2s_1s_3s_2s_1$ and $x=s_2s_3$ in $A_3$. It is easy to see that both Conditions (A) and (B) hold in this example.
\end{exa}

Based on thorough computer tests, we now formulate a conjecture about the equivalence of Conditions (A) and (B) in a strong sense. 

\begin{conj} \label{conj} Let $\mathfrak w$ be a reduced word for $w$ and $x\le w$. The following are equivalent:

{\rm (i)} $\lambda_{x, \mathfrak w}=\lambda({\scr{C}}^{-}_{x, \mathfrak w})^*\,;$

{\rm (ii)} $\lambda(\scr{C}_{x,\mathfrak w}^+)=\lambda(\scr{C}_{x,\mathfrak w}^-)^*\,;$

{\rm (iii)} $\lambda_{x, \mathfrak w}=\lambda(\scr{C}_{x,\mathfrak w}^+)\,.$
\end{conj}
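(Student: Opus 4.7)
I would prove Conjecture 5.1 by induction on $\ell(w)$, leveraging the recursion $\mathfrak w = s_1\cdots s_n \mapsto s_1\mathfrak w = s_2\cdots s_n$ developed in Lemmas 6.1--6.4, together with the inversion symmetry $(w,x,\mathfrak w)\mapsto(w^{-1},x^{-1},\mathfrak w^R)$ with $\mathfrak w^R = s_n\cdots s_1$. A direct check shows that this involution preserves Bruhat order, maps $\mathscr{C}^+_{x,\mathfrak w}$ to $\mathscr{C}^-_{x^{-1},\mathfrak w^R}$ via the position reflection $p\mapsto n-p+1$, and correspondingly swaps conditions (i) and (iii) while preserving (ii). It therefore suffices to establish the single equivalence (ii)$\iff$(iii); applying the involution to $(w^{-1},x^{-1},\mathfrak w^R)$ then yields (i)$\iff$(ii), completing the three-way equivalence.

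The base case $\ell(w)=0$ is trivial. For the inductive step, let $\lambda(\mathscr{C}^+_{x,\mathfrak w})=(i_1,\ldots,i_d)$, $\lambda(\mathscr{C}^-_{x,\mathfrak w})^*=(j_1,\ldots,j_d)$, and write $k_1$ for the smallest entry of $\lambda_{x,\mathfrak w}$. The unconditional equivalence
\[
k_1=1\iff x\leq s_1w\iff i_1=1
\]
aligns the first entries of $\lambda_{x,\mathfrak w}$ and $\lambda(\mathscr{C}^+_{x,\mathfrak w})$ in the $\{=1,>1\}$ dichotomy. I split into three cases. In \textbf{Case A} ($x\not\leq s_1w$), one has $i_1,k_1>1$, and an argument parallel to the proof of Lemma 6.2(i) gives $j_1>1$ as well (otherwise $x$ would have a reduced expression inside $s_1\mathfrak w$, contradicting $x\not\leq s_1w$); Lemmas 6.1 and 6.2(i),(ii) then yield the uniform shifts
\[
\lambda_{x,\mathfrak w}=\lambda_{s_1x,\,s_1\mathfrak w}+\mathbf{1},\qquad
\lambda(\mathscr{C}^\pm_{x,\mathfrak w})=\lambda(\mathscr{C}^\pm_{s_1x,\,s_1\mathfrak w})+\mathbf{1},
\]
reducing (ii) and (iii) for $(w,x,\mathfrak w)$ to the same conditions for $(s_1w,s_1x,s_1\mathfrak w)$, where the induction hypothesis applies. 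In \textbf{Case B1} ($x\leq s_1w$ with $x<s_1x$, i.e.\ $j_1=1$), all three first entries equal $1$, and Lemma 6.2(iii),(iv) together with a parallel verification for $\lambda_{x,\mathfrak w}$ produce analogous shifts to $(s_1w,x,s_1\mathfrak w)$; induction again closes the case.

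The main obstacle is \textbf{Case B2} ($x\leq s_1w$ with $s_1x<x$, so $j_1>1$), where $i_1=k_1=1\neq j_1$ forces (ii) to fail outright. The equivalence then demands that (iii) also fail, which I plan to derive from the following auxiliary claim: if $\mathfrak w$ is a good word for $x$ with $1\in\lambda_{x,\mathfrak w}$, then $s_1x>x$---equivalently, Case B2 excludes the good-word hypothesis and hence also (iii). My strategy for the auxiliary claim is to iterate the good-word property recursively: a direct verification shows that $s_1\mathfrak w$ is again a good word for $x$, with $\lambda_{x,s_1\mathfrak w}$ equal to $\lambda_{x,\mathfrak w}\setminus\{1\}$ shifted by $-1$, and by tracking the first letter of the reduced expression of $x$ extracted from the good positions of $s_1\mathfrak w$ through the recursion---combined with the Björner--Wachs shellability data of \cite{BW}---I expect to rule out the simultaneous occurrence of $s_1$ as a left descent of $x$ and position $1$ as a good-word index. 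Proving this auxiliary claim is the most delicate part of the argument, and I anticipate it to be the technical heart of the proof.
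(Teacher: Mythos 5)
Your reduction via word reversal and your induction on the first letter follow the same skeleton as the paper's Appendix, and your Cases A and B1 are essentially the paper's two cases. But your choice of pivot equivalence creates a case the paper never has to face, and that case is exactly where your argument is incomplete. The paper proves (i)$\Leftrightarrow$(ii) directly: in each direction the hypothesis forces the relevant first entries to agree (under (i), $1\in\lambda_{x,\mathfrak w}\Leftrightarrow j_1=1$; under (ii), $i_1=j_1$ with $i_1=1\Leftrightarrow x\le s_1w$), so the "mismatch" situation $x\le s_1w$, $s_1x<x$ never arises. By electing to prove (ii)$\Leftrightarrow$(iii) you must rule out precisely this situation under hypothesis (iii), and this is your Case B2. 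There you reduce to the auxiliary claim (a good word with position $1$ removable forces $s_1x>x$), which you explicitly do not prove. This is the genuine gap: the claim is not a consequence of the cited lemmas, and your sketched route to it starts from the assertion that $s_1\mathfrak w$ is again a good word for $x$ with $\lambda_{x,s_1\mathfrak w}=(\lambda_{x,\mathfrak w}\setminus\{1\})-1$; the paper proves that shift (Appendix, Lemma \ref{lem:remove-one-node-lambda-result}, first part) only under $j_1=1$, which implies $x<s_1x$ --- the opposite of your Case B2 hypothesis. When $s_1x<x$ the $Z(s,w_1,w_2)$ arguments give only $s_1x\le s_1\mathfrak w_{\widehat t}$ from $x\le \mathfrak w_{\widehat t}$ (and vice versa), not the equivalence you need, so that first step is unsupported and the "technical heart" you defer is not merely unfinished but has no visible proof along the proposed lines. (It is also not obviously weaker than the conjecture itself: goodness alone is strictly weaker than Condition (A), which is why the paper works with the shellability labels rather than good words.)

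A secondary, fixable issue is in Case A: you justify the uniform shift $\lambda_{x,\mathfrak w}=\lambda_{s_1x,s_1\mathfrak w}+\mathbf 1$ by citing Lemma \ref{6.1}, but that lemma assumes $\mathfrak w$ is a good word for $x$, which is not available in the direction where (ii) is the hypothesis. The statement is nevertheless true unconditionally in Case A: the inclusion $\lambda_{x,\mathfrak w}-1\subseteq\lambda_{s_1x,s_1\mathfrak w}$ is the Appendix's \eqref{eq:toprove_not1}, and for the reverse inclusion note that if $s_1x\le u$ with $u$ the element of $s_2\cdots\hat s_t\cdots s_n$ and $s_1u<u$, then the $Z$-property gives $x\le u\le s_1w$ (the word $s_2\cdots\hat s_t\cdots s_n$ is a subword of the reduced word for $s_1w$), contradicting $x\not\le s_1w$; so only the ascent subcase occurs and the $Z$-property yields $x\le s_1u$. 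The paper instead closes this point with Deodhar's inequality plus the inductive conclusion (Appendix, proof of (ii)$\Rightarrow$(i), Case 2). Either repair works, but as written your citation does not. In summary: the framework is sound and parallel to the paper, but Case B2 --- the heart of your version of the statement --- is an open hole, and the most economical fix is the paper's own move: prove (i)$\Leftrightarrow$(ii) (or, symmetrically, derive your missing case from that equivalence via the reversal involution) rather than attacking (ii)$\Leftrightarrow$(iii) head-on.
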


The proof of the conjecture, due to D. Muthiah and A. Pusk\'as, is included as an Appendix. Thus, we are able use the more symmetric condition 
\[\lambda_{x, \mathfrak w}=\lambda(\scr{C}_{x,\mathfrak w}^+)=\lambda({\scr{C}}^{-}_{x, \mathfrak w})^*\,.\]
Note that it is enough to prove ${\rm (i)}\Leftrightarrow {\rm (ii)}$, as ${\rm (i)}\Leftrightarrow {\rm (iii)}$ would easily follow; indeed, just reverse the reduced word and use the fact that inversion is an automorphism of the Bruhat order. 

We now discuss some statistics related to the frequency with which Conditions (A) and (B) are satisfied. We looked at the symmetric groups $S_4$, $S_5$, and $S_6$, as well as at the hyperoctahedral groups $B_4$ and $B_5$. For each (signed) permutation $w$, we calculated (with the help of a computer) the percentage of $x\le w$ which satisfy Conditions (A) and (B). The distribution of these percentages in $S_5$ and $S_6$ is shown in Figure~\ref{fig-3}. It is interesting to note that this distribution is skewed right, with the mode at the right tail, while the interquartile range reaches 100\% in both cases. By contrast, in type $B$, the distribution looks closer to a uniform one.

Experiments with the same Weyl groups mentioned above also showed that the formula in Theorem~\ref{thm-main} fails if Conditions (A) and (B) are not satisfied.

\begin{figure}[ht]
\[
\begin{array}{cc}
\!\!\!\!\!\!\!\!\!\!\!\!\!\!\!\!\!\!\!\!\!\!\!\!\!\!\!\!\!\!\!\!\!\!\!\!\!\!\!\!\!\!		\includegraphics[scale=0.75]{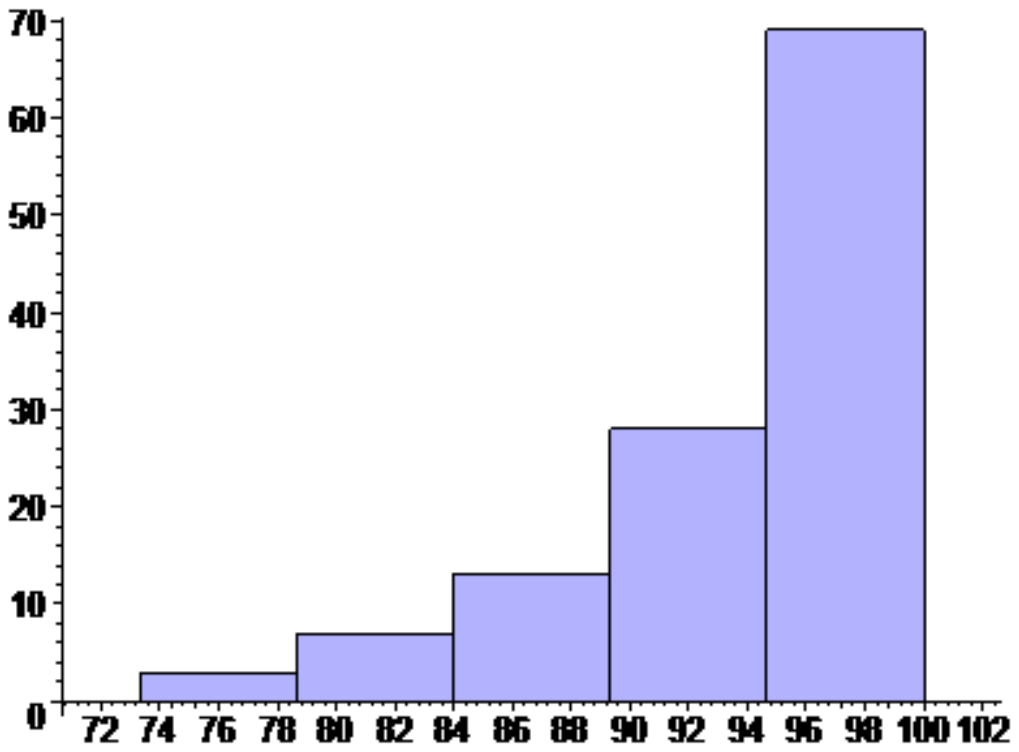} & \!\!\!\!\!\!\!\!\!\!\!\!\!\!\!\!\!\!\!\!\!\!\!\!\!\!\!\!\!\!\!\!\!\!\!\!\!\!\!\!\!\!\!\!\!\!\!\!\!\!\!\!\!\!\!\!\!\!\!\!\!\!\!\!\!\!\!\!\!\!\!\!\!\!\!\!\!\!\!\!\!\!\!\!\!\!\!\!\!\!\!\!\!\!\!\!\!\!\!\!\!\!\!\!\!\!\!\!\!\!\!\!\!\!\!\!\!\!\!\!\!\!\!\!\!\!\!\!\!\!\!\!\!\!\!\! \includegraphics[scale=0.75]{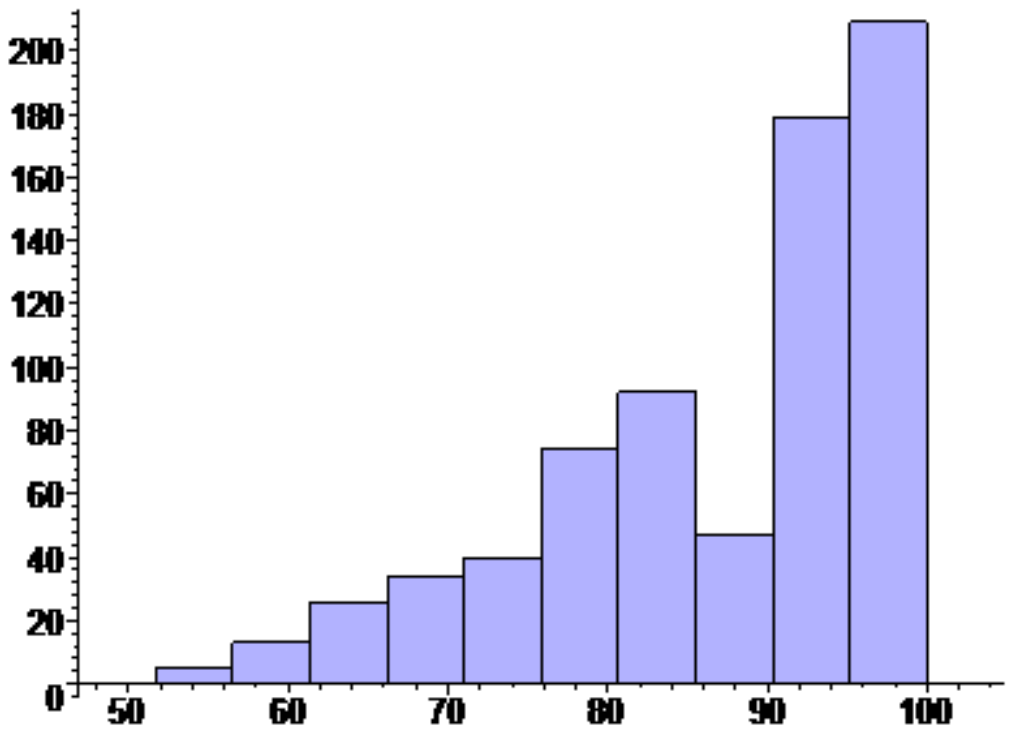}
		\end{array}\]
		\vspace{-13.4cm}
		\caption{Histograms for $S_5$ and $S_6$} \label{fig-3}
\end{figure}

\section{Casselman's basis of Iwahori vectors} \label{sec-Cass}

In this section, under the shellability Condition (B) in Section 6, we compute the transition matrix between two natural bases of the Iwahori fixed vectors in a spherical representation of a semisimple $p$-adic group, considered by Casselman in \cite{C}.  For simply-laced cases, a conjectural formula is given in \cite{BN}, which is proved under the assumption that a good word exists; however, it seems that there is a gap in this proof, which we do not know how to fix at present.  
We follow the strategy of computations in \cite{BN}, although we consider reduced words from a very different point of view. Let us first recall the basic formulations and collect a few results we need from \cite{BN}.

Let $\chi=\chi_{\bf z}$ be an unramified character of $T(F)$, which is parametrized by an element ${\bf z}$ in the  complex torus $\hat{T}$ of  the L-group ${}^LG$. Let $V(\chi)=\textrm{Ind}^G_B\chi$ be the induced representation which consists of locally constant functions $f: G\to{\bf C}$ such that $f(bg)=(\delta^{1/2}\chi)(b) f(g)$, where $\delta=\det(\textrm{Ad}|_\mathfrak{n})$ is the modular character. Let $J$ be the Iwahori subgroup which is the preimage of $B({\bf F}_q)$ under the reduction $K=G(\cal O_F)\to G({\bf F}_q)$. Then the space of $J$-fixed vectors $V(\chi)^J$ has dimension $|W|$, and there are two bases $\{\phi_{w, \chi}\}$ and $\{f_w\}$ of $V(\chi)^J$ parametrized by $W$.

The first natural basis $\{\phi_{w, \chi}\}$ is defined using the disjoint decomposition $G=\bigsqcup_{w\in W}BwJ$ such that
$\phi_w$ is supported on $BwJ$ and $\phi_{w,\chi}|_{wJ}=1$. Let $M_w: V(\chi)\to V({}^w\chi)$ be the intertwining operator defined by
\[
(M_wf)(g)=\int_{N\cap wN_-w^{-1}}f(w^{-1}ng)dn.
\]
Then $\{f_w\}$ is the dual basis of the linear functionals $V(\chi)\to {\bf C}$, $f\mapsto (M_wf)(1)$, $w\in W$. Casselman \cite{C} asks for the transition matrix between these two bases, which is in general a very difficult problem. It is better to use the basis
\[
\psi_{x,\chi}=\sum_{w\geq x}\phi_{w,\chi}
\]
instead of $\phi_{w,\chi}$, and by M\"{o}bus inversion one has
\[
\phi_{x,\chi}=\sum_{w\geq x}(-1)^{\ell(w)-\ell(x)}\psi_{w,\chi}.
\]
If we write $\psi_{x,\chi}=\sum_{w\in W} m(x,w)f_w$, then obviously $m(x,w)=(M_w\psi_{x,\chi})(1)$ and in \cite{BN} it is shown that $(m(x,w))$ is upper triangular.  In [\textit{loc. cit}] it is conjectured that 
\[
m(x,w)=\prod_{\alpha\in S(x,w)}\frac{1-q^{-1}{\bf z}^\alpha}{1-{\bf z}^\alpha}
\]
when the root system $\Phi$ is simply-laced and $|S(x,w)|=\ell(w) - \ell(x)$,  and it is proved under the additional assumption that $w$ admits a good word for $x$.

Let $H$ be the Iwahori--Hecke algebra which consists of bi-$J$-invariant functions supported on $K$. Then $H$ has a basis $\{t_w| w\in W\}$, where $t_w$ is the characteristic function of $JwJ$, and $H$ is generated by $t_i:=t_{\sigma_i}$, $i\in I$. Let $\alpha_\chi: V(\chi)^J\to H$ be the isomorphism of left $H$-modules defined by 
$(\alpha_\chi f)(g)=\left. f(g^{-1})\right|_K$. Let $\cal M_w=\cal M_{w, {\bf z}}: H\to H$ be the map making the following diagram commute:
\begin{displaymath}
\xymatrix{
V(\chi)\ar[r]^{M_w} \ar[d]^{\alpha_\chi} & V({}^w\chi) \ar[d]^{\alpha_{{}^w\chi}} \\
H \ar[r]^{\cal M_{w}} & H 
}
\end{displaymath}
Define $\mu_{\bf z}(w)=\cal M_w(1_H)\in H$. Then 
\begin{equation}\label{mu1}
\mu_{\bf z}(\sigma_i)=q^{-1}t_i +(1-q^{-1})\frac{{\bf z}^{a_i}}{1-{\bf z}^{a_i}},
\end{equation}
and for $\ell(w_1w_2)=\ell(w_1)+\ell(w_2)$ one has
\begin{equation}\label{mu2}
\mu_{\bf z}(w_1w_2)=\mu_{\bf z}(w_2)\mu_{w_2{\bf z}}(w_1).
\end{equation}
Define $\psi(x)=\alpha_\chi(\psi_x)\in H$. Then $\psi(x)=\sum_{w\geq x}t_w$ is independent of $\chi$. For $f\in H$ let $\Lambda(f)$ be the coefficient of $1$ in the expression of $f$ in terms of the basis $t_w$. Then 
 \[
 m(x,w)=\Lambda(\psi(x)\mu_{\bf z}(w)).
 \]
 For $f, g\in H$ and $x\in W$, write $f-g\geq x$ if $f-g$ is a linear combination of $t_w$'s with $w\geq x$.
 
 \begin{prop}\cite{BN} \label{bn}
 Let $s=s_\alpha\in S$, $x\in W$ such that $xs>x$. Then
 \[
 \psi(x)\mu_{\bf z}(s)=\frac{1-q^{-1}{\bf z}^\alpha}{1-{\bf z}^\alpha}\psi(x),\quad \psi(xs)\mu_{\bf z}(s)- \psi(x) \geq xs. 
 \]
 \end{prop}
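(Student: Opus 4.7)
The plan is to expand $\mu_{\bf z}(s)=q^{-1}t_s+(1-q^{-1})\frac{{\bf z}^\alpha}{1-{\bf z}^\alpha}$ and reduce both identities to computations of the right multiplications $\psi(x)t_s$ and $\psi(xs)t_s$ in the Iwahori--Hecke algebra, using the standard rules $t_v t_s=t_{vs}$ when $vs>v$ and $t_v t_s=q\,t_{vs}+(q-1)t_v$ when $vs<v$. The whole argument rests on tracking how the involution $\tau\colon v\mapsto vs$ interacts with the Bruhat intervals $[x,\infty)$ and $[xs,\infty)$, where the right-handed $Z$-property (Lemma \ref{lem}, applied on the right) is the key combinatorial input.

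For the first identity, I would first show that the set $\{v\geq x\}$ is stable under $\tau$. Indeed, if $v\geq x$ and $vs>v$ then $vs\geq v\geq x$ trivially, while if $v\geq x$ and $vs<v$ then the $Z$-property for the ascent $xs>x$ and descent $v>vs$ gives $x\leq v\Leftrightarrow x\leq vs$. Thus $\{v\geq x\}$ partitions into orbits $\{v,vs\}$ with $v<vs$, and on each such orbit a direct computation yields
\[
(t_v+t_{vs})t_s=t_{vs}+\bigl(q\,t_v+(q-1)t_{vs}\bigr)=q(t_v+t_{vs}).
\]
Summing gives $\psi(x)t_s=q\,\psi(x)$, and plugging into the expansion of $\mu_{\bf z}(s)$ produces exactly the scalar $\frac{1-q^{-1}{\bf z}^\alpha}{1-{\bf z}^\alpha}$.

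For the second identity, the set $\{v\geq xs\}$ is no longer $\tau$-stable, and the bookkeeping is where care is needed. I would partition $\{v\geq xs\}$ according to the $\tau$-behaviour: the orbits $\{v,vs\}$ that lie entirely inside $\{v\geq xs\}$ contribute $q(t_v+t_{vs})$ as before, while for each $w\geq xs$ with $ws<w$ and $ws\not\geq xs$, the pair $\{w,ws\}$ straddles the boundary. Using the right $Z$-property twice, one shows that the straddling partner $ws$ is characterized precisely by $ws\geq x$ and $ws\not\geq xs$; equivalently, the set of boundary-crossing elements outside $\{v\geq xs\}$ is exactly $\{v\geq x\}\setminus\{v\geq xs\}$. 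Combining these contributions should collapse to
\[
\psi(xs)t_s=q\,\psi(x)-\sum_{w\in B'_2}t_w,
\]
where $B'_2=\{w\geq xs:ws<w,\;ws\not\geq xs\}$. Since both $\psi(xs)$ and $\sum_{B'_2}t_w$ are supported on $\{w\geq xs\}$, inserting this into $\psi(xs)\mu_{\bf z}(s)$ cancels the leading $\psi(x)$ and leaves a remainder supported on $\{w\geq xs\}$, which is the desired inequality.

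The expected main obstacle is the second identity's orbit analysis: one must simultaneously control both the Bruhat position of $w$ relative to $xs$ \emph{and} that of $ws$, and only the combination of two applications of the right $Z$-property — once to the ascent $x<xs$ with a descent $w>ws$, and once to two ascents — identifies the boundary-crossing orbits. Once that identification is made, the rest reduces to the same kind of pair-by-pair matching that drives the first identity, and the scalar $\frac{{\bf z}^\alpha}{1-{\bf z}^\alpha}$ arising from $\mu_{\bf z}(s)$ enters only through the trivial observation that $\psi(xs)\geq xs$.
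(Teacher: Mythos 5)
Your proof is correct: the paper itself does not reprove this proposition but cites it from [BN], and your argument is essentially the standard one given there — expand $\mu_{\bf z}(s)$, compute $\psi(x)t_s=q\,\psi(x)$ via the $\tau$-stable pairing of $\{v\ge x\}$, and for the second identity use the right-hand $Z$-property to identify the boundary-crossing partners with $\{v\ge x\}\setminus\{v\ge xs\}$, giving $\psi(xs)t_s=q\,\psi(x)-\sum_{w\in B_2'}t_w$ with the correction term supported above $xs$. I checked the orbit computations and the characterization of $B_2'$; they hold as claimed.
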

 
 Now we can give our formula for $m(x,w)$ in full root system generality, assuming that Condition (B) holds.
 
 \begin{theorem}\label{7.2}
 Assume condition {\rm (B)}. Let $\gamma_{i_k}= s_n\cdots s_{i_k+1}\alpha_{i_k}$, $k=1,\ldots, d$. Then
 \[
 m(x,w)=\prod^d_{k=1}\frac{1-q^{-1}{\bf z}^{\gamma_{i_k}}}{1-{\bf z}^{\gamma_{i_k}}}.
 \]
 \end{theorem}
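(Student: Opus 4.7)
The plan is to induct on $n=\ell(w)$, following the overall strategy of \cite{BN} but adapted to Condition~(B). The base case $n=0$ is trivial. For the inductive step, factor
\[
\mu_{\bf z}(w)=\mu_{\bf z}(s_n)\,\mu_{s_n{\bf z}}(w'),\qquad w':=ws_n=s_1\cdots s_{n-1},
\]
via (\ref{mu2}), and split according to whether the rightmost letter $s_n$ of $\mathfrak w$ is removed ($i_d=n$, Case~A) or kept ($i_d<n$, Case~B) in passing from $w$ to $x$.

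In Case~A we have $xs_n>x$ and Proposition~\ref{bn} gives $\psi(x)\mu_{\bf z}(s_n)=\frac{1-q^{-1}{\bf z}^{\gamma_n}}{1-{\bf z}^{\gamma_n}}\psi(x)$ (using $\gamma_n=\alpha_n$). In Case~B we have $xs_n<x$ and Proposition~\ref{bn} gives $\psi(x)\mu_{\bf z}(s_n)=\psi(xs_n)+R$ with $R$ supported on $\{t_u:u\geq x\}$. In both cases I first verify that Condition~(B) is inherited by the smaller pair $(w',x)$, respectively $(w',xs_n)$, with reduced word $\mathfrak w'=s_1\cdots s_{n-1}$. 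The good-word part of the condition passes by direct check (using the Bruhat lift property to handle the keep/remove analysis in Case~B), while for the shellability labels I would truncate $\scr C^-_{x,\mathfrak w}$ at its first step in Case~A (its lex-maximality in $[x,w']$ is immediate by prepending $w\to w'$ and comparing with $\scr C^-_{x,\mathfrak w}$), and apply Lemma~\ref{6.2}(i) to the reversed triple $(w^{-1},x^{-1},\mathfrak w^{-1})$ in Case~B (which turns the right-end-kept situation into a left-end-kept one) and invert back.

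Applying the inductive hypothesis then yields $m(x,w')(s_n{\bf z})$ in Case~A or $m(xs_n,w')(s_n{\bf z})$ in Case~B as a product of the desired shape, where the substitution ${\bf z}\mapsto s_n{\bf z}$ transports each root $\gamma'_{i_k}=s_{n-1}\cdots s_{i_k+1}\alpha_{i_k}$ (built from $\mathfrak w'$) to $s_n\gamma'_{i_k}=\gamma_{i_k}$. This immediately gives the theorem in Case~A; Case~B requires in addition that the correction term $\Lambda(R\,\mu_{s_n{\bf z}}(w'))$ vanishes.

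The hard part is this vanishing $\Lambda(R\,\mu_{s_n{\bf z}}(w'))=0$. For it I would invoke the trace identity $\Lambda(t_ut_v)=q^{\ell(u)}\delta_{u,v^{-1}}$ on the Iwahori--Hecke algebra together with the fact that $\mu_{{\bf z}'}(w')$ is supported on $\{t_v:v\leq w'^{-1}\}$ (visible by expanding the product $\prod_j(q^{-1}t_j+c_j)$ in the standard basis $\{t_w\}$). Together these reduce the vanishing to the combinatorial statement that no $u$ satisfies both $u\geq x$ and $u\leq w'$, that is, $x\not\leq ws_n$. This is exactly Lemma~\ref{6.1}(i) applied to the reversed triple $(w^{-1},x^{-1},\mathfrak w^{-1})$ (in which the leftmost letter of $\mathfrak w^{-1}$ is kept, matching Case~B), which yields $x^{-1}\not\leq s_nw^{-1}$ and hence $x\not\leq ws_n$ upon inversion. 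This closes the induction.
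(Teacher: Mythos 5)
Your proof is correct and is essentially the paper's argument repackaged: the paper unrolls your induction into a single telescoping sum $\sum_j[\psi(y_j)\mu(s_j)-\cdots]\mu(s_{j-1})\cdots\mu(s_1)$ over the letters of $\mathfrak w$ read from the right, uses Proposition~\ref{bn} exactly as you do to distinguish removed letters (which contribute the factors $C(k)$) from kept letters, and kills each error term via the same non-comparability $y_j\not\leq s_1\cdots s_{j-1}$ that you extract from the reversed Lemma~\ref{6.2}(i) (the paper derives it from the lexicographic maximality of $\scr{C}^-$ on the truncated interval). The one detail you spell out that the paper delegates to ``arguing as in \cite{BN}'' is the mechanism of the annihilation by $\Lambda$, namely the trace identity $\Lambda(t_ut_v)=q^{\ell(u)}\delta_{u,v^{-1}}$ combined with the observation that $\mu(w')$ is supported on $\{t_v: v\le w'^{-1}\}$.
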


\begin{proof} The proof follows the argument in \cite{BN}, but we shall give some details for the sake of completeness. Write $\mu(s_n)=\mu_{\bf z}(s_n)$, $\mu(s_{n-1})=\mu_{s_n({\bf z})}(s_{n-1}), \ldots$, suppressing the dependence of spectral parameters.
Write $\psi(x)\mu_{\bf z}(w)$ as a sum
\begin{align*}
[\psi(s_1\cdots\hat{s}_{i_1}\cdots \hat{s}_{i_d}\cdots s_n)\mu(s_n)-\psi(s_1\cdots\hat{s}_{i_1}\cdots \hat{s}_{i_d}\cdots s_{n-1})]\mu(s_{n-1})\cdots\mu(s_1)&+\\
[\psi(s_1\cdots\hat{s}_{i_1}\cdots \hat{s}_{i_d}\cdots s_{n-1})\mu(s_{n-1})-\psi(s_1\cdots\hat{s}_{i_1}\cdots \hat{s}_{i_d}\cdots s_{n-2})]\mu(s_{n-2})\cdots\mu(s_1)&+\\
\cdots & \\
[\psi(s_1\cdots\hat{s}_{i_1}\cdots \hat{s}_{i_d})\mu(s_{i_d})-C(d)\psi(s_1\cdots\hat{s}_{i_1}\cdots \hat{s}_{i_d})]\mu(s_{i_d-1})\cdots\mu(s_1) &+\\
C(d)[\psi(s_1\cdots\hat{s}_{i_1}\cdots s_{i_d-1})\mu(s_{i_d-1})-\psi(s_1\cdots\hat{s}_{i_1}\cdots s_{i_d-2})]\mu(s_{i_d-2})\cdots\mu(s_1)& +\\
\cdots & \\
C(d)\cdots C(1)[\psi(s_1)\mu(s_1)-\psi(1)]&+\\
C(d)\cdots C(1)\psi(1),&
\end{align*}
where
\[
C(k)=\frac{1-q^{-1}{\bf z}^{\gamma_k}}{1-{\bf z}^{\gamma_k}},\quad k=1,\ldots, d.
\]
We will show that the linear functional $\Lambda$ annihilates every summand except the last, so that $m(x,w)=C(d)\cdots C(1)$.

Since we have the reduced words $w_k^+:=s_1\cdots \hat{s}_{i_1}\cdots \hat{s}_{i_k} s_{i_k+1}\cdots s_n$, $k=1,\ldots,n$, which form the maximal chain ${\scr{C}}^+_{x,\mathfrak{w}}$, we see that $s_1\cdots \hat{s}_{i_1}\cdots s_{i_k}> s_1\cdots \hat{s}_{i_1}\cdots \hat{s}_{i_k}$. Therefore by Proposition \ref{bn} the summands of the form
\[
\prod_{j>k}C(j)[\psi(s_1\cdots \hat{s}_{i_1}\cdots \hat{s}_{i_k})\mu(s_{i_k})-C(k)
\psi(s_1\cdots \hat{s}_{i_1}\cdots \hat{s}_{i_k})]\mu(s_{i_k-1})\cdots \mu(s_1)
\]
are all equal to zero. Note that the spectral parameter of $\mu(s_{i_k})$ is $s_{i_k+1}\cdots s_n{\bf z}$ and one has
$(s_{i_k+1}\cdots s_n{\bf z})^{\alpha_{i_k}}={\bf z}^{s_n\cdots s_{i_k+1}\alpha_{i_k}}={\bf z}^{\gamma_{i_k}}.$

Every other summand is a constant multiple of the form
\begin{equation}\label{other}
[\psi(s_1\cdots \hat{s}_{i_1}\cdots \hat{s}_{i_2}\cdots s_j)\mu(s_j)-\psi(s_1\cdots \hat{s}_{i_1}\cdots\hat{s}_{i_2}\cdots s_{j-1})]\mu(s_{j-1})\cdots \mu(s_1).
\end{equation}
Since $s_1\cdots \hat{s}_{i_1}\cdots \hat{s}_{i_2}\cdots s_j$ is reduced, by Proposition \ref{bn} we have
\[
\psi(s_1\cdots \hat{s}_{i_1}\cdots \hat{s}_{i_2}\cdots s_j)\mu(s_j)-\psi(s_1\cdots \hat{s}_{i_1}\cdots\hat{s}_{i_2}\cdots s_{j-1})\geq s_1\cdots \hat{s}_{i_1}\cdots \hat{s}_{i_2}\cdots s_j.
\]
Applying (\ref{mu1}), (\ref{mu2}) and arguing as in \cite{BN} one can deduce that (\ref{other}) is annihilated by $\Lambda$ unless 
\begin{equation} \label{eqn-no}
s_1\cdots \hat{s}_{i_1}\cdots \hat{s}_{i_2}\cdots s_j \leq s_1\cdots s_{j-1} =s_1 \cdots s_{j-1} \hat{s}_j .
\end{equation}
Assume that \eqref{eqn-no} is true, let $d'=\max\{1\leq k\leq d: i_k<j\}$, $x'=s_1\cdots \hat{s}_{i_1}\cdots\hat{s}_{i_{d'}}\cdots s_j$ and $\mathfrak w'=s_1\cdots s_j$. Recall that we have the reduced words $w_k^-:=s_1\cdots \hat{s}_{i_{d-k+1}}\cdots \hat{s}_{i_d}\cdots s_n$,
$k=1,\ldots, d$, which make the maximal chain $\scr{C}^-_{x,\mathfrak w}$. Consider the following subchain of 
$\scr{C}^-_{x,\mathfrak w}$
\[
w^-_{d-d'}\to w^-_{d-d'+1}\to\cdots \to  w^-_d = x.
\]
By taking reduced subwords, it gives rise to a maximal chain of $[x',\mathfrak w']$
\[
\scr{C}: \mathfrak w'=w_0' \to w_1'\to \cdots \to w_{d'}'= x'
\]
where $w_i'=s_1\cdots \hat{s}_{d'-i+1}\cdots \hat{s}_{d'}\cdots s_j$, $i=1,\ldots, d'$.
Then $\lambda(\scr{C})=(i_{d'},\ldots, i_1)$ is decreasing, which implies that 
$\scr{C}=\scr{C}^-_{x', \mathfrak w'}$. But similarly to the proof of Lemma \ref{6.2} (i), this contradicts (\ref{eqn-no}) because $\scr{C}^-_{x', \mathfrak w'}$ is lexicographically maximal.
This finishes the proof of the theorem.
\end{proof}

\begin{rmk} Given the equivalence of Conditions (A) and (B), proved in the Appendix, the Bump-Nakasuji result \cite{BN} in full root system generality immediately follows from Theorem~\ref{7.2}. 
\end{rmk}

\section{Appendix: Proof of Conjecture \ref{conj} \vskip .2 cm By Dinakar Muthiah and Anna Pusk\'as } \label{appendix}

In this appendix, we prove Conjecture \ref{conj}. The conjecture is that the following three conditions are equivalent. 
\begin{enumerate}[(i)]
\item $\lap{x}{\w}=\ldechr{x}{\w}\,;$
\item $\linch{x}{\w}=\ldechr{x}{\w}\,;$
\item $\lap{x}{\w}=\linch{x}{\w}\,.$
\end{enumerate}
As mentioned right below Conjecture \ref{conj}, it suffices to prove that ${\rm (i)}$ and ${\rm (ii)}$ are equivalent.

We will keep the notations in the previous sections. In particular, we have $x\leq w$ two elements of $W,$ $\w=s_1\cdots s_n$ a reduced word for $w;$ $\lap{x}{\w}=(\la_1,\ldots ,\la _k),$ $\lambdaCplus{x}{\fw} = (i_1, \cdots, i_d)$ and $\lambdaCminus{x}{\fw}^{\ast} = (j_1, \cdots, j_d).$

\subsection{Proof of ${\rm (i)} \implies {\rm (ii)}$}

\begin{lemma}
  \label{lem:ione-equals-one-is-equivalent-to-lambdaone-equals-one}
 Let $x$, $w$, $\fw$ be as before, $\lambda_{x,\fw} = (\lambda_1, \cdots, \lambda_k)$, and $\lambdaCplus{x}{\fw} = (i_1, \cdots, i_d)$. Then $i_1 = 1$ if and only if $\lambda_1 = 1$.
\end{lemma}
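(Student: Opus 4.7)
The plan is to unwind both sides of the equivalence into the same intermediate statement, namely that there exists a maximal chain of $[x,w]$ whose first label (with respect to the fixed reduced word $\fw$) equals $1$. This will make the lemma nearly immediate from the definitions together with the lexicographic shellability property (ii) of $\sC^+_{x,\fw}$.

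For the direction $(\Leftarrow)$, I assume $\lambda_1 = 1$. By definition of $\lambda_{x,\fw}$ this means $x \leq s_2 s_3 \cdots s_n$. Note that $w \to s_2 \cdots s_n$ is a covering relation (obtained by deleting the first letter of the reduced word $\fw$), and its label is $1$. Any maximal chain from $s_2\cdots s_n$ down to $x$ inside $[x,\, s_2\cdots s_n]$ can be prepended with this covering to produce a maximal chain $\sC$ of $[x,w]$ with $\lambda_1(\sC) = 1$. Since labels lie in $[1,n]$ and $\sC^+_{x,\fw}$ is lexicographically minimal among maximal chains of $[x,w]$, we conclude $i_1 \leq 1$, hence $i_1 = 1$.

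For the direction $(\Rightarrow)$, I assume $i_1 = 1$. Then the first vertex of $\sC^+_{x,\fw}$ after $w$ is the element obtained from $\fw$ by deleting the first letter, namely $s_2 \cdots s_n$. Since this vertex lies in $[x,w]$, we have $x \leq s_2 \cdots s_n = s_1 \cdots \hat{s}_1 \cdots s_n$, so $1 \in \lambda_{x,\fw}$. As $\lambda_1$ is by definition the minimum element of $\lambda_{x,\fw}$, it follows that $\lambda_1 = 1$.

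I do not anticipate any real obstacle here: both directions come down to the standard fact that the label-$1$ covering is available if and only if $x$ sits below $s_2\cdots s_n$, combined with the lex-minimality characterization of $\sC^+_{x,\fw}$ recalled in Section~4. The only thing to be careful about is the notational clash between $(\lambda_1,\ldots,\lambda_k)$ used in the appendix for the ascending listing of $\lambda_{x,\fw}$ and the $(i_1,\ldots,i_d)$ notation used for the same object in Section~4; but this is purely cosmetic.
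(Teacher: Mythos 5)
Your proof is correct and follows essentially the same route as the paper's: the forward direction reads off $x\le s_2\cdots s_n$ from the first step of $\sC^+_{x,\fw}$, and the reverse direction prepends the cover $w\to s_2\cdots s_n$ (which is a cover since $s_2\cdots s_n$ is reduced of length $n-1$) to a maximal chain of $[x,s_2\cdots s_n]$ and invokes the lexicographic minimality of $\lambda(\sC^+_{x,\fw})$. No gaps.
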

\begin{proof}
Assume first that $i_1=1.$ Then the chain $\inch{x}{\w}$ starts with $\w_{\widehat{1}},$ and hence $x\leq \w_{\widehat{1}}$ and thus $\la_1=1.$
For the other direction, assume $\la_1=1.$ Omitting the first simple reflection from $\w$ only decreases its length by $1,$ hence $\ell(\w_{\widehat{1}})=\ell(w)-1.$ Composing $w\rightarrow \w_{\widehat{1}}$ with a maximal chain from $\w_{\widehat{1}}$ to $x$ gives a maximal chain ${\mathscr{C}}$ from $\w$ to $x$ whose label starts with $1.$ Then $\inch{x}{\w}\leq_L {\mathscr{C}}$ implies $i_1=1.$
\end{proof}

\begin{rmk}\label{rmk:wgoodif1}
If ${\rm (i)}$ holds for $x$ and $\w,$ i.e. $\lap{x}{\w}=\ldechr{x}{\w},$ then $\w$ is a good word of $w$ for $x.$ (Omitting all the reflections from $\w$ that appear in $\lap{x}{\w}$ is the same as taking the last element of the maximal chain $\dech{x}{\w};$ that last element is $x.$)
\end{rmk}

\begin{prop}{${\rm (i)} \implies {\rm (ii)}$.}
\end{prop}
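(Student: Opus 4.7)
The plan is to proceed by induction on $n := \ell(w)$. Writing $\lap{x}{\w} = (m_1, \ldots, m_d)$, $\linch{x}{\w} = (i_1, \ldots, i_d)$, and $\ldechr{x}{\w} = (j_1, \ldots, j_d)$, all in ascending order, hypothesis (i) reads $m_k = j_k$ for every $k$; by Remark~\ref{rmk:wgoodif1} it forces $\w$ to be a good word for $x$, so $d = \ell(w) - \ell(x)$. The task is to show $i_k = j_k$ for every $k$, and I would split the inductive step (the base case $n=0$ being vacuous) into two sub-cases according to whether $1 \in \lap{x}{\w}$.

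In Case~A ($m_1 = 1$), hypothesis (i) immediately gives $j_1 = 1$, and Lemma~\ref{lem:ione-equals-one-is-equivalent-to-lambdaone-equals-one} forces $i_1 = 1$. Lemma~\ref{6.3} then furnishes that $s_1 \w$ is a good word of $s_1 w$ for $x$, together with
\[
\lap{x}{s_1 \w} = (m_2 - 1, \ldots, m_d - 1) = \ldechr{x}{s_1 \w}.
\]
Thus the triple $(s_1 w, x, s_1 \w)$ satisfies condition (i) at strictly smaller length, so the inductive hypothesis supplies (ii) for it. Comparing with the identification $\linch{x}{s_1 \w} = (i_2 - 1, \ldots, i_d - 1)$ from Lemma~\ref{6.2}(iii) yields $i_k = j_k$ for $k \geq 2$, which combined with $i_1 = 1 = j_1$ completes this case.

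In Case~B ($m_1 > 1$), the argument will run symmetrically: (i) and Lemma~\ref{lem:ione-equals-one-is-equivalent-to-lambdaone-equals-one} give $j_1 > 1$ and $i_1 > 1$; Lemma~\ref{6.1} produces a good word $s_1 \w$ of $s_1 w$ for $s_1 x$ with $\lap{s_1 x}{s_1 \w} = (m_1 - 1, \ldots, m_d - 1)$; and Lemma~\ref{6.2}(ii) identifies $\ldechr{s_1 x}{s_1 \w} = (j_1 - 1, \ldots, j_d - 1)$. These two tuples coincide by (i), so $(s_1 w, s_1 x, s_1 \w)$ again satisfies (i). Induction coupled with $\linch{s_1 x}{s_1 \w} = (i_1 - 1, \ldots, i_d - 1)$ from Lemma~\ref{6.2}(i) then yields $i_k = j_k$ for all $k$.

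The main obstacle is the careful bookkeeping across the two descent regimes: the child triple differs between the two cases --- $(s_1 w, x)$ in Case~A and $(s_1 w, s_1 x)$ in Case~B --- and in each regime one must invoke the correct instance of Lemmas~\ref{6.1}--\ref{6.3} and Lemma~\ref{lem:ione-equals-one-is-equivalent-to-lambdaone-equals-one} to ensure that hypothesis (i) descends faithfully. Once this is in place, the uniform shift of indices by $-1$ closes the induction routinely.
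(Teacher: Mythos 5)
Your proposal is correct and follows essentially the same route as the paper: the same case split on whether $1\in\lap{x}{\w}$, the same child pairs $(s_1w,x)$ and $(s_1w,s_1x)$, and the same invocations of Lemmas \ref{6.1}, \ref{6.2}, \ref{6.3} and \ref{lem:ione-equals-one-is-equivalent-to-lambdaone-equals-one}. The only (immaterial) difference is that you induct on $\ell(w)$ alone, whereas the paper uses $\ell(w)+(\ell(w)-\ell(x))$; both quantities decrease in each case since $\ell(s_1w)=\ell(w)-1$.
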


\begin{proof}
We proceed by induction on $\ell(w) + (\ell(w) - \ell(x))$; the base case is trivial.  

Assume that ${\rm (i)}$ holds for a pair $x,\w$, i.e. $\lap{x}{\w}=\ldechr{x}{\w}$.  We would like to show that ${\rm (ii)}$ holds for $x,\w$ as well, i.e. $\linch{x}{\w}=\ldechr{x}{\w}$.

Consider $\la_1=j_1,$ the first index in the labels $\lap{x}{\w}=\ldechr{x}{\w}.$ We distinguish between two cases according to whether $\la_1=j_1=1$ or $\la_1=j_1>1.$ 

\noindent {\bf Case 1:} $\la_1=j_1=1.$ Then by Lemma \ref{6.3} (iv), we have that ${\rm (i)}$ holds for the pair $x,\w'=s_1\w$. Then by induction, ${\rm (ii)}$ holds for $x$ and $\w'$, i.e. $\linch{x}{\w'}=\ldechr{x}{\w'}.$ 

By Lemma \ref{6.2} (iii) and (iv), we have:
\begin{equation}
  \label{eq:11}
(i_2-1,\ldots ,i_d-1)=(j_2-1,\ldots ,j_d-1)\,.
\end{equation}
Together with $i_1=1$ (Lemma \ref{lem:ione-equals-one-is-equivalent-to-lambdaone-equals-one}) we conclude that $i_r=j_r$ for every $1\le r\leq d$, hence ${\rm (ii)}$ holds for the pair $x,\w$.

\noindent {\bf Case 2:} $\la_1=j_1>1$. By Lemma \ref{6.1} (iii) and Lemma \ref{6.2} (ii), ${\rm (i)}$ holds for the pair $x' = s_1 x$ and $\w' = s_1 \w$. By induction, ${\rm (ii)}$ also holds for $x',\w'$. By Lemma \ref{lem:ione-equals-one-is-equivalent-to-lambdaone-equals-one}, $i_1 > 1$. Thus by Lemma \ref{6.2} (i) and (ii), we have $(i_1 - 1, \cdots, i_d -1) = (j_1 - 1, \cdots, j_d -1)$, which implies $(i_1, \cdots, i_d) = (j_1, \cdots, j_d )$.
\end{proof}

\subsection{Proof of ${\rm (ii)} \implies {\rm (i)}$}

\begin{lemma}
  \label{lem:remove-one-node-lambda-result}
 Let $x$, $w$, $\fw$ be as before.  Write $\lambdaCplus{x}{\fw} = (i_1, \cdots, i_d)$, $\lambdaCminus{x}{\fw}^* = (j_1, \cdots, j_d)$. 
 
 Suppose $j_1 = 1$, then:
 \begin{itemize}
 \item $x \leq s_1 w\,;$
 \item $\lambda_{x, s_1\fw} = (\lambda_{x,\fw} \backslash \{1\}) - 1\,.$
 \end{itemize}
 
 Suppose $i_1 > 1$, then:
 \begin{itemize}
 \item $s_1 x \leq s_1w\,;$
 \item $\lambda_{s_1 x, s_1\fw} \supseteq \lambda_{x,\fw} - 1\,.$
 \end{itemize}
Here we write $(\lambda_{x,\fw} \backslash \{1\}) - 1$ and $\lambda_{x,\fw} - 1$ to refer to the set obtained by subtracting $1$ from all elements.
\end{lemma}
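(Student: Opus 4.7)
My plan is to treat the two cases independently, using the $Z(s,w_1,w_2)$ property (Lemma~\ref{lem}) with $s=s_1$ as the main tool in each. Throughout, I would introduce the group elements $u_k:=\fw_{\hat k}$ and $v_k:=(s_1\fw)_{\widehat{k-1}}$ for $k\in\{2,\ldots,n\}$; by construction $u_k=s_1 v_k$. Unwinding the definitions, the second bullet in each case reduces to understanding when $x\leq u_k\Leftrightarrow x\leq v_k$ (Case~1), respectively when $x\leq u_k\Rightarrow s_1 x\leq v_k$ (Case~2), and in each case the hypothesis on $j_1$ or $i_1$ is precisely what pins down the ``leftmost'' behavior of $x$ under left multiplication by $s_1$.

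For Case 1 ($j_1=1$), I would first extract two structural facts from $\scr{C}^-_{x,\fw}$. The label list along this chain is $(j_d,\ldots,j_2,1)$ with all $j_r\geq 2$ for $r\geq 2$, so the penultimate element is obtained from $\fw$ by deleting only positions $\geq 2$ and hence has a reduced word beginning with $s_1$; this forces $w_{d-1}=s_1x$, so $x<s_1 x$, and simultaneously exhibits $x$ as a reduced subword of $s_1 w=s_2\cdots s_n$, yielding $x\leq s_1 w$ (and $1\in\lambda_{x,\fw}$). For the set equality, I need $x\leq u_k\Leftrightarrow x\leq v_k$ for each $k\geq 2$. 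Using $x<s_1 x$, I would apply Lemma~\ref{lem} with $w_1=x$: if $v_k<s_1 v_k$, take $w_2=v_k$; if $v_k>s_1 v_k$, take $w_2=u_k=s_1 v_k$, which then satisfies $u_k<s_1 u_k=v_k$. Either way the desired equivalence falls out of the three-way $Z$-equivalence.

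For Case 2 ($i_1>1$), the chain $\scr{C}^+_{x,\fw}$ never removes position~$1$, so $x$ admits a reduced expression beginning with $s_1$, forcing $s_1 x<x$ and exhibiting a reduced word for $s_1 x$ as a subword of $s_1\fw$; this already gives $s_1 x\leq s_1 w$ (alternatively, invoke Lemma~\ref{6.2}(i)). By Lemma~\ref{6.2}(i) we also have $x\not\leq s_1 w$, so every $k\in\lambda_{x,\fw}$ satisfies $k\geq 2$. For such $k$ I want $s_1 x\leq v_k$ given $x\leq s_1 v_k$. If $v_k<s_1 v_k$, Lemma~\ref{lem} applied with $w_1=s_1 x$ (valid since $s_1 x<x=s_1(s_1 x)$) and $w_2=v_k$ gives $s_1 x\leq v_k\Leftrightarrow x\leq s_1 v_k$. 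If $v_k>s_1 v_k$, Bruhat transitivity gives $x\leq s_1 v_k\leq v_k$, after which $s_1 x\leq x\leq v_k$ finishes the job.

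The only mild subtlety is that Case~2 asserts merely the containment $\lambda_{s_1 x,s_1\fw}\supseteq\lambda_{x,\fw}-1$, not equality; in the $v_k>s_1 v_k$ subcase the reverse implication $s_1 x\leq v_k\Rightarrow x\leq u_k$ can genuinely fail, so the containment is in general proper. Beyond this bookkeeping, the proof is routine case analysis driven by Lemma~\ref{lem} and Lemma~\ref{6.2}, together with the standard subword characterization of Bruhat order.
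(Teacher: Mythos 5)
Your proof is correct and follows essentially the same route as the paper's: the heart of both arguments is the $Z(s,w_1,w_2)$ property (Lemma~\ref{lem}) applied to the pair $\fw_{\widehat{t}}$ versus $s_1\fw_{\widehat{t}}$, split into the two subcases according to which of these is larger. The only (harmless) difference is that you obtain the preliminary facts $x<s_1x$, $x\leq s_1w$, resp.\ $s_1x<x$, $s_1x\leq s_1w$ directly from the chain labels via the subword characterization of Bruhat order, where the paper cites Lemma~\ref{6.2}(i),(iv) and diamond diagrams.
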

\begin{proof}
Note that $s_1\fw$ is a reduced word for $s_1w,$ and $s_1w<w.$

First suppose $j_1=1.$ By Lemma \ref{6.2} (iv) we have $x<s_1x.$ By Lemma \ref{lem} we may draw the diagram 
\begin{center}
\begin{tikzpicture}
 \draw[-] (-\picsize,0) -- (0,\picsize);
 \draw[-] (0,-\picsize) -- (\picsize,0);
 \draw[-,dashed] (0,-\picsize) -- (-\picsize,0); %lower diagonal
 \draw[-] (0,-\picsize) -- (0,\picsize); %vertical
  \draw (-0.3*\picsize,-0.2*\picsize) node [] {\large{\rotatebox[origin=c]{45}{$\Longleftarrow$}}}; %left
 %\draw[-,dashed] (\picsize,0) -- (0,\picsize); %higher diagonal
 \draw (-\picsize,0) node [fill=white] {$s_1w$}; %left
 \draw (0,\picsize) node [fill=white] {$w$}; %top
 \draw (0,-\picsize) node [fill=white] {$x$}; %bottom
 \draw (\picsize,0) node [fill=white] {$s_1x$}; %right
\end{tikzpicture}
\end{center}
and conclude that $x \leq s_1 w.$ Let $1<t\leq n$ and $\w_{\widehat{t}}:=s_1\cdots \widehat{s_t}\cdots s_n,$ and $s_1\w_{\widehat{t}}:=s_2\cdots \widehat{s_t}\cdots s_n(=(s_1\w)_{\widehat{t-1}}).$ To show  $\lambda_{x, s_1\fw} = (\lambda_{x,\fw} \backslash \{1\}) - 1,$ it suffices to prove 
\begin{equation}\label{eq:toprove_1}
x\leq \w_{\widehat{t}} \;\;\Longleftrightarrow\;\; x\leq s_1\w_{\widehat{t}}\:.
\end{equation}
(Note that we are slightly abusing notation. For example, when we write $x\leq \w_{\widehat{t}}\:$, we mean $x \leq w_{\widehat{t}}$ where $w_{\widehat{t}}$ is the Weyl group element obtained by multiplying out the word $\w_{\widehat{t}}\:$.)

To prove \eqref{eq:toprove_1}, we use Lemma \ref{lem} again. We have either $\w_{\widehat{t}}<s_1\w_{\widehat{t}}$ or $\w_{\widehat{t}}>s_1\w_{\widehat{t}}\:;$ we may accordingly draw one of the following two diagrams.
%consider the cases $\w_{\widehat{t}}<s_1\w_{\widehat{t}}$ and $\w_{\widehat{t}}>s_1\w_{\widehat{t}}$ separately. 

\begin{center}
\hfill 
\tikz[anchor=center,baseline]\node{\begin{tikzpicture}
 \draw[-] (-\picsize,0) -- (0,\picsize);
 \draw[-] (0,-\picsize) -- (\picsize,0);
 \draw[-,dashed] (0,-\picsize) -- (-\picsize,0); %lower diagonal
 \draw[-,dashed] (0,-\picsize) -- (0,\picsize); %vertical
 \draw (-0.3*\picsize,-0.2*\picsize) node [] {\large{\rotatebox[origin=c]{45}{$\Longleftrightarrow$}}}; %left
 %\draw[-,dashed] (\picsize,0) -- (0,\picsize); %higher diagonal
 \draw (-\picsize,0) node [fill=white] {$\w_{\widehat{t}}$}; %left
 \draw (0,\picsize) node [fill=white] {$s_1\w_{\widehat{t}}$}; %top
 \draw (0,-\picsize) node [fill=white] {$x$}; %bottom
 \draw (\picsize,0) node [fill=white] {$s_1x$}; %right
\end{tikzpicture}}; \hfill\tikz[anchor=center,baseline]\node{\begin{tikzpicture}
 \draw[-] (-\picsize,0) -- (0,\picsize);
 \draw[-] (0,-\picsize) -- (\picsize,0);
 \draw[-,dashed] (0,-\picsize) -- (-\picsize,0); %lower diagonal
 \draw[-,dashed] (0,-\picsize) -- (0,\picsize); %vertical
 \draw (-0.3*\picsize,-0.2*\picsize) node [] {\large{\rotatebox[origin=c]{45}{$\Longleftrightarrow$}}}; %left
 %\draw[-,dashed] (\picsize,0) -- (0,\picsize); %higher diagonal
 \draw (-\picsize,0) node [fill=white] {$s_1\w_{\widehat{t}}$}; %left
 \draw (0,\picsize) node [fill=white] {$\w_{\widehat{t}}$}; %top
 \draw (0,-\picsize) node [fill=white] {$x$}; %bottom
 \draw (\picsize,0) node [fill=white] {$s_1x$}; %right
\end{tikzpicture}};\hfill 
\end{center}
These diagrams together imply that \eqref{eq:toprove_1} holds in both cases.

Next suppose $i_1>1.$ The argument in this case is very similar to the one above. We claim $x>s_1x.$ Assume to the contrary that $x<s_1x.$ Then again by Lemma \ref{lem} we may draw the following diagram.
\begin{center}
\begin{tikzpicture}
 \draw[-] (-\picsize,0) -- (0,\picsize);
 \draw[-] (0,-\picsize) -- (\picsize,0);
 \draw[-,dashed] (0,-\picsize) -- (-\picsize,0); %lower diagonal
 \draw[-] (0,-\picsize) -- (0,\picsize); %vertical
 \draw (-0.3*\picsize,-0.2*\picsize) node [] {\large{\rotatebox[origin=c]{45}{$\Longleftarrow$}}}; %left
 %\draw[-,dashed] (\picsize,0) -- (0,\picsize); %higher diagonal
 \draw (-\picsize,0) node [fill=white] {$s_1w$}; %left
 \draw (0,\picsize) node [fill=white] {$w$}; %top
 \draw (0,-\picsize) node [fill=white] {$x$}; %bottom
 \draw (\picsize,0) node [fill=white] {$s_1x$}; %right
\end{tikzpicture}
\end{center}
This contradicts the statement of Lemma \ref{6.2} (i) that $x\nleq s_1w.$ Hence we have $x>s_1x,$ and consequently the diagram 
\begin{center}
\begin{tikzpicture}
 \draw[-] (-\picsize,0) -- (0,\picsize);
 \draw[-] (0,-\picsize) -- (\picsize,0);
 \draw[-,dashed] (0,-\picsize) -- (-\picsize,0); %lower diagonal
 %\draw[-] (0,-\picsize) -- (0,\picsize); %vertical
 \draw (0,0) node [] {\large{\rotatebox[origin=c]{45}{$\Longleftarrow$}}}; %left
 \draw[-] (\picsize,0) -- (0,\picsize); %higher diagonal
 \draw (-\picsize,0) node [fill=white] {$s_1w$}; %left
 \draw (0,\picsize) node [fill=white] {$w$}; %top
 \draw (0,-\picsize) node [fill=white] {$s_1x$}; %bottom
 \draw (\picsize,0) node [fill=white] {$x$}; %right
\end{tikzpicture}
\end{center}
shows that $s_1 x \leq s_1w.$ Take $1<t\leq n$ and $\w_{\widehat{t}}$ and $s_1\w_{\widehat{t}}$ as in the case $i_1=1$ above. To prove $\lambda_{s_1 x, s_1\fw} \supseteq \lambda_{x,\fw} - 1$ we need to show 
\begin{equation}\label{eq:toprove_not1}
x\leq \w_{\widehat{t}} \;\;\Longrightarrow\;\; s_1x\leq s_1\w_{\widehat{t}}\:.
\end{equation}
First consider the case when $\w_{\widehat{t}}<s_1\w_{\widehat{t}}\:.$ Then if $x\leq \w_{\widehat{t}}$ we have 
\begin{equation}
s_1x< x\leq \w_{\widehat{t}}<s_1\w_{\widehat{t}}\:,
\end{equation}
whence $s_1x<s_1\w_{\widehat{t}}\:.$
If on the other hand $\w_{\widehat{t}}>s_1\w_{\widehat{t}}\:,$ then again by Lemma \ref{lem} we have the diagram 
\begin{center}
\begin{tikzpicture}
 \draw[-] (-\picsize,0) -- (0,\picsize);
 \draw[-] (0,-\picsize) -- (\picsize,0);
 \draw[-,dashed] (0,-\picsize) -- (-\picsize,0); %lower diagonal
 %\draw[-] (0,-\picsize) -- (0,\picsize); %vertical
 \draw (0,0) node [] {\large{\rotatebox[origin=c]{45}{$\Longleftrightarrow$}}}; %left
 \draw[-,dashed] (\picsize,0) -- (0,\picsize); %higher diagonal
 \draw (-\picsize,0) node [fill=white] {$s_1\w_{\widehat{t}}$}; %left
 \draw (0,\picsize) node [fill=white] {$\w_{\widehat{t}}$}; %top
 \draw (0,-\picsize) node [fill=white] {$s_1x$}; %bottom
 \draw (\picsize,0) node [fill=white] {$x$}; %right
\end{tikzpicture}
\end{center}
which proves \eqref{eq:toprove_not1}. 
\end{proof}

\begin{prop}{${\rm (ii)} \implies {\rm (i)}$.}
\end{prop}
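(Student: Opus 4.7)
The plan is to proceed by induction on $\ell(w)+(\ell(w)-\ell(x))$, mirroring the structure of the proof of ${\rm (i)} \implies {\rm (ii)}$. Assuming condition (ii) for $(x,\fw)$, so that $\lambdaCplus{x}{\fw}=\lambdaCminus{x}{\fw}^*=(i_1,\ldots,i_d)$, the goal is to recover $\lap{x}{\fw}=\{i_1,\ldots,i_d\}$. The base case $d=0$ (i.e.\ $x=w$) is immediate. The natural induction step is to strip off $s_1$ and pass to a pair of shorter total length for which the hypothesis can be re-verified, then reconstruct $\lap{x}{\fw}$ via Lemma \ref{lem:remove-one-node-lambda-result}.

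First I would split into two cases according to whether $i_1=1$ or $i_1>1$, paralleling the previous proof. If $i_1=1$, I pass to the pair $(x,s_1\fw)$: Lemma \ref{6.2} parts (iii) and (iv) show that both the increasing label $\lambdaCplus{x}{s_1\fw}$ and the reversed decreasing label $\lambdaCminus{x}{s_1\fw}^*$ equal $(i_2-1,\ldots,i_d-1)$, so condition (ii) holds for this smaller pair. Applying the induction hypothesis delivers (i) for $(x,s_1\fw)$, after which Lemma \ref{lem:remove-one-node-lambda-result} (case $j_1=1$) gives the clean equality $\lap{x}{s_1\fw}=(\lap{x}{\fw}\setminus\{1\})-1$. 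Combining this with Lemma \ref{lem:ione-equals-one-is-equivalent-to-lambdaone-equals-one}, which certifies that $1\in\lap{x}{\fw}$ precisely because $i_1=1$, reconstructs $\lap{x}{\fw}=\{1,i_2,\ldots,i_d\}$, as required.

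In the case $i_1>1$, I would pass instead to $(s_1x,s_1\fw)$: Lemma \ref{6.2} parts (i) and (ii) again preserve (ii), now with shifted labels $(i_1-1,\ldots,i_d-1)$, and induction yields $\lap{s_1x}{s_1\fw}=\{i_1-1,\ldots,i_d-1\}$. Here lies the main obstacle of the argument: Lemma \ref{lem:remove-one-node-lambda-result} in the case $i_1>1$ only provides the one-sided inclusion $\lap{s_1x}{s_1\fw}\supseteq\lap{x}{\fw}-1$, rather than the equality available in the previous case. To close this gap, I would combine the inclusion with Deodhar's inequality $\#\lap{x}{\fw}\geq d$: the inclusion forces $\#\lap{x}{\fw}\leq \#\lap{s_1x}{s_1\fw}=d$, so both bounds are equalities and the inclusion sharpens to $\lap{x}{\fw}-1=\{i_1-1,\ldots,i_d-1\}$, giving $\lap{x}{\fw}=\{i_1,\ldots,i_d\}=\lambdaCminus{x}{\fw}^*$ and completing the induction.
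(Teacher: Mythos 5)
Your proposal is correct and follows essentially the same route as the paper's own proof: the same induction on $\ell(w)+(\ell(w)-\ell(x))$, the same case split on $i_1=1$ versus $i_1>1$ using Lemmas \ref{lem:ione-equals-one-is-equivalent-to-lambdaone-equals-one}, \ref{6.2} and \ref{lem:remove-one-node-lambda-result}, and the same resolution of the one-sided inclusion in the second case by combining the cardinality from the induction hypothesis with Deodhar's inequality.
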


\begin{proof}
Let $x,w, \fw$ be as before.
We proceed by induction on $\ell(w) + (\ell(w) - \ell(x))$; the base case is trivial. 

Let us assume $\lambdaCplus{x}{\fw} = \lambdaCminus{x}{\fw}^*$. Write $\lambda_{x,\fw} = (\lambda_1, \cdots, \lambda_k)$,  $\lambdaCplus{x}{\fw} = (i_1, \cdots, i_d)$, and $\lambdaCminus{x}{\fw} = (j_d, \cdots, j_1)$. Our assumption means that $i_r = j_r$ for all $r$.  

\noindent {\bf Case 1:} $i_1 = j_1 = 1$. In this case $\lambda_1 =1$ by Lemma \ref{lem:ione-equals-one-is-equivalent-to-lambdaone-equals-one}, and Lemma \ref{lem:remove-one-node-lambda-result} tells us that $x \leq s_1w$ and:
\begin{equation}
  \label{eq:01}
\lambda_{x, s_1\fw} = (\lambda_{x,\fw} \backslash \{1\}) - 1\,.
\end{equation}

 Then by Lemma \ref{6.2} (iii) and (iv), we have that:
 \begin{equation}
   \label{eq:3}
\lambdaCplus{x}{s_1\fw} = \lambdaCminus{x}{s_1\fw}^* = (i_2-1,\cdots, i_d -1) \,.  
 \end{equation}
By induction, we know: 
\begin{equation}
  \label{eq:10}
\lambda_{x,s_1\fw} = \lambdaCminus{x}{s_1\fw}^*  \,.
\end{equation}
By \eqref{eq:01} \eqref{eq:3} and \eqref{eq:10}, $\lambda_{x,\fw} = (i_1, \cdots, i_d)$. Therefore $\lambda_{x,\fw} = \lambdaCminus{x}{\fw}^*$.  

% \noindent {\bf Case 1:} $i_1 = 1$. In this case $\lambda_1 =1$ by Lemma \ref{lem:ione-equals-one-is-equivalent-to-lambdaone-equals-one}, and Lemma \ref{lem:remove-one-node-lambda-result} tells us that $x \leq s_1w$ and:
% \begin{equation}
%   \label{eq:01}
% \lambda_{x, s_1\fw} \supseteq  (\lambda_{x,\fw} \backslash \{1\}) - 1
% \end{equation}

%  Then by Lemma \ref{6.2} (iii) and (iv), we have that:
%  \begin{equation}
%    \label{eq:3}
% \lambdaCplus{x}{s_1\fw} = \lambdaCminus{x}{s_1\fw}^* = (i_2-1,\cdots, i_d -1)   
%  \end{equation}
% By induction, we know $\lambda_{x,s_1\fw} = \lambdaCminus{x}{s_1\fw}^*$. In particular: 
% \begin{equation}
%   \label{eq:4}
% \# \lambda_{x,s_1\fw} = \ell(w) - \ell(x) - 1  
% \end{equation}
% By Deodhar's inequality: 
%  \begin{equation}
%    \label{eq:1}
% \# \lambda_{x,\fw} \geq \ell(w) - \ell(x)   
%  \end{equation}
% Equations  (\ref{eq:01}), (\ref{eq:4}), and (\ref{eq:1}) together imply:
%  \begin{equation}
%    \label{eq:2}
% \lambda_{x, s_1\fw} = (\lambda_{x,\fw} \backslash \{1\}) - 1   
%  \end{equation}
% By (\ref{eq:3}) and (\ref{eq:2}), $\lambda_{x,\fw} = (i_1, \cdots, i_d)$. Therefore $\lambda_{x,\fw} = \lambdaCminus{x}{\fw}^*$.  

\noindent {\bf Case 2:} $i_1 = j_1 > 1$. The argument is very similar. In this case, $\lambda_1 > 1$ by Lemma \ref{lem:ione-equals-one-is-equivalent-to-lambdaone-equals-one}, and Lemma \ref{lem:remove-one-node-lambda-result} tells us that $s_1x \leq s_1w$ and:
\begin{equation}
  \label{eq:5}
\lambda_{s_1x, s_1\fw} \supset \lambda_{x,\fw} - 1  \,.
\end{equation}

By Lemma \ref{6.2} (i) and (ii), we have that:
\begin{equation}
  \label{eq:6}
 \lambdaCplus{s_1x}{s_1\fw} = \lambdaCminus{s_1 x}{s_1\fw}^* = (i_1-1,\cdots, i_d -1) \,.
\end{equation}
 By induction, we know:
 \begin{equation}\label{eq:indconcl}
 \lambda_{s_1x,s_1\fw} = \lambdaCminus{s_1x}{s_1\fw}^*\,.
 \end{equation}
 In particular:
 \begin{equation}
   \label{eq:7}
\# \lambda_{s_1x,s_1\fw}  = \ell(w) - \ell(x)   \,.
 \end{equation}
By Deodhar's inequality:
\begin{equation}
  \label{eq:8}
\# \lambda_{x,\fw} \geq \ell(w) - \ell(x)  \,.
\end{equation}
So (\ref{eq:5}), (\ref{eq:7}), and (\ref{eq:8}) together imply:
\begin{equation}
  \label{eq:9}
 \lambda_{s_1x, s_1\fw} = \lambda_{x,\fw} - 1 \,.
\end{equation}
By \eqref{eq:6}, \eqref{eq:indconcl} and \eqref{eq:9}, $\lambda_{x,\fw} = (i_1, \cdots, i_d)$. Therefore $\lambda_{x,\fw} = \lambdaCminus{x}{\fw}^*$. 
\end{proof}


\begin{thebibliography}{CERP}

\bibitem[BL]{BL}
S. Billey and V. Lakshmibai.
\newblock \textit{Singular loci of Schubert varieties}.
\newblock Progress in Mathematics, {182}. Birkhäuser Boston, Inc., Boston, MA, 2000.

\bibitem[BW]{BW}
A. Bj\"{o}rner and M. Wachs.
\newblock {Bruhat order of Coxeter groups and shellability}.
\newblock {\it Adv. Math.} {43}:87--100, 1982.

\bibitem[BBL]{BBL}
B. Brubaker, D. Bump and A. Licata.
\newblock {Whittaker functions and Demazure operators}.
\newblock {\it J. Number Theory} {146}:41--68, 2015.

\bibitem[BN]{BN}
D. Bump and M. Nakasuji.
\newblock {Casselman's basis of Iwahori vectors and the Bruhat order}.
\newblock {\it Canad. J. Math.} {63}:1238--1253, 2011.

\bibitem[C]{C}
W. Casselman.
\newblock {The unramified principal series of $p$-adic groups I. The spherical function}.
\newblock {\it Compos. Math.} {40}:387--406, 1980.

\bibitem[CS]{CS}
W. Casselman and J. Shalika.
\newblock {The unramified principal series of p-adic groups II. The Whittaker function}.
\newblock {\it Compos. Math.} {41}:207--231, 1980.

% \bibitem[CGP]{CGP}
% G. Chinta, P.E. Gunnells and A. Pusk\'as.
% \newblock {Metaplectic Demazure operators and Whittaker functions}.
% \newblock {\tt arXiv:1408.5394}.

\bibitem[D1]{D1}
V. V. Deodhar.
\newblock {Some characterizations of Bruhat ordering on a Coxeter group and determination of the relative M\"{o}bius function}.
\newblock {\it Invent. Math.} {39}:187--198, 1977.

\bibitem[D2]{D2}
V. V. Deodhar.
\newblock {A combinatorial setting for questions in Kazhdan-Lusztig theory}.
\newblock {\it Geom. Dedicata} {36}:95--119, 1990.

\bibitem[LZ]{LZ}
C.~Lenart and K.~Zainoulline.
\newblock  Towards generalized cohomology Schubert calculus via formal root polynomials.
\newblock {\tt arXiv:1408.5952}.

\bibitem[NN]{NN}
M. Nakasuji and H. Naruse.
\newblock Yang-Baxter basis of Hecke algebra and Casselman's problem (extended abstract).
\newblock {\tt arXiv:1512.04485}. 

\end{thebibliography}
\end{document}